	\def\MR#1{}
\newcommand{\Bl}{{\mathcal{B}\ell}}
\newcommand{\sM}{\mathscr{M}}
\newcommand{\sC}{\mathcal{C}}
\newcommand{\sD}{\mathcal{D}}
\newcommand{\sY}{\mathscr{Y}}
\newcommand{\sW}{\mathscr{W}}
\newcommand{\cB}{\mathcal{B}}
\newcommand{\kk}{\mathbb{k}}
\newcommand{\NN}{\normalfont\mathbb{N}}
\newcommand{\ZZ}{\mathbb{Z}}
\newcommand{\PP}{{\normalfont\mathbb{P}}}
\newcommand{\mm}{{\normalfont\mathfrak{m}}}
\newcommand{\QQ}{\mathbb{Q}}
\newcommand{\aaa}{\mathcal{A}}
\newcommand{\length}{{\rm length}}
\newcommand{\II}{\mathscr{I}}
\newcommand{\JJ}{\mathscr{J}}
\newcommand{\Sym}{\normalfont\text{Sym}}
\newcommand{\Rees}{\mathscr{R}}
\newcommand{\OO}{\mathcal{O}}
\newcommand{\LL}{\mathscr{L}}
\newcommand{\llf}{\mathbb{L}}
\newcommand{\HH}{\normalfont\text{H}}
\newcommand{\gr}{{\normalfont\text{gr}}}
\newcommand{\Proj}{\normalfont\text{Proj}}
\newcommand{\Spec}{\normalfont\text{Spec}}
\def\f0{\mathbf{0}}
\def\1{\mathbf{1}}
\newtheorem{theorem}{Theorem}[section]
\newtheorem{headthm}{Theorem}
\newaliascnt{headcor}{headthm}
\newtheorem{headcor}[headcor]{Corollary}
\newaliascnt{headconj}{headthm}
\newaliascnt{corollary}{theorem}
\newtheorem{corollary}[corollary]{Corollary}
\newaliascnt{claim}{theorem}
\newtheorem{claim}[claim]{Claim}
\newaliascnt{lemma}{theorem}
\newtheorem{lemma}[lemma]{Lemma}
\newaliascnt{conjecture}{theorem}
\newaliascnt{proposition}{theorem}
\newtheorem{proposition}[proposition]{Proposition}
\theoremstyle{definition}
\newaliascnt{definition}{theorem}
\newtheorem{definition}[definition]{Definition}
\newaliascnt{notation}{theorem}
\newaliascnt{example}{theorem}
\newtheorem{example}[example]{Example}
\newaliascnt{examples}{theorem}
\newaliascnt{remark}{theorem}
\newtheorem{remark}[remark]{Remark}
\newaliascnt{question}{theorem}
\newaliascnt{questions}{theorem}
\newaliascnt{problem}{theorem}
\newaliascnt{construction}{theorem}
\newaliascnt{setup}{theorem}
\newtheorem{setup}[setup]{Setup}
\newaliascnt{algorithm}{theorem}
\newaliascnt{observation}{theorem}
\newaliascnt{defprop}{theorem}
\newaliascnt{fact}{theorem}
\newtheorem{chunk}[definition]{}
\DeclareFontFamily{OT1}{pzc}{}
\DeclareFontShape{OT1}{pzc}{m}{it}{<-> s * [1.100] pzcmi7t}{}
\DeclareMathAlphabet{\mathchanc}{OT1}{pzc}{m}{it}
\DeclareMathOperator{\fSpec}{\mathchanc{Spec}}
\DeclareMathOperator{\fProj}{\mathchanc{Proj}}
\def\equationautorefname~#1\null{(#1)\null}
\def\sectionautorefname~#1\null{Section #1\null}
\def\subsectionautorefname~#1\null{\S #1\null}
\def\surjects{\twoheadrightarrow}
\begin{document}

	% ---------------Tittle and presentation----------------------------
	\title{Segre classes and integral dependence}

	\author{Yairon Cid-Ruiz}
	\address{Department of Mathematics, North Carolina State University, Raleigh, NC 27695, USA}
	\email{ycidrui@ncsu.edu}

%\date{\today}
\keywords{Segre classes, integral dependence, blow-up, normal cone, Vogel cycle, polar cycle, intersection algorithm}
\subjclass[2020]{14C15, 14C17, 13B21, 13B22, 13H15}

	\begin{abstract}
		A fundamental property of Segre classes is their birational invariance.
		This invariance implies that the Segre class of a closed subscheme only depends on the integral closure of the defining ideal sheaf.
		
		In this paper, we show that, conversely, the Segre class of a closed subscheme encodes an integral dependence criterion for its defining ideal sheaf.
		As an application, we prove that Aluffi’s Segre zeta function provides an integral dependence criterion for homogeneous ideals in polynomial rings.
	\end{abstract}

\maketitle

\section{Introduction}

The \emph{Segre class} associated with an embedding of schemes plays a central role in Fulton--MacPherson's intersection theory and underlies numerous applications.
A detailed account of Segre classes and their applications can be found in \cite{FULTON_INTER,ALUFFI_SURVEY}.

A central and defining property of Segre classes is their \emph{birational invariance}.
Indeed, if $Z$ is a proper closed subvariety of a variety $X$ over a field $\kk$, this invariance yields 
$$
s(Z, X) \;=\; \eta_*\left(s(E, \mathcal{B})\right) \;=\; \eta_*\left(c\left(N_E\mathcal{B}\right)^{-1} \smallfrown [E]\right) \;=\; \eta_*\left(\frac{[E]}{1+E}\right) \;\in\; A_*(Z),
$$
where $\pi : \mathcal{B} = \Bl_Z(X) \rightarrow X$ is the blow-up of $X$ along $Z$, $\eta : E = E_ZX \rightarrow Z$ is the exceptional divisor, and $N_E\mathcal{B}$ is the normal bundle to $E$ in $\mathcal{B}$ (see \cite[\S 4.2]{FULTON_INTER}).
Thus, by birational invariance, computing Segre classes reduces to the case of effective Cartier divisors, whose Segre classes admit an explicit formula since they are regularly embedded.

The birational invariance of Segre classes also implies that they depend only on the \emph{integral closure} of the underlying ideal sheaf.
More precisely, if we let $Z'= V\left(\overline{\II_Z}\right) \subset X$ where $\overline{\II_Z} \subset \OO_X$ is the integral closure of the ideal sheaf $\II_Z$ of $Z$, then the Segre class $s(Z, X) \in A_*(Z)$ of $Z$ equals the Segre class $s(Z', X) \in A_*(Z')$ of $Z'$ (since $Z'_{\rm red} = Z_{\rm red}$, we can identify their Chow groups).

\smallskip

Our main result demonstrates that Segre classes, in fact, determine integral dependence, leading to the following criterion:
	
\begin{headthm}[\autoref{thm_main}]
	\label{thmA}
	Let $X$ be an equidimensional projective scheme over a field $\kk$.
	Let $\LL$ be an ample line bundle on $X$ and $W \subseteq Z$ be two closed subschemes of $X$.
	Then the following four conditions are equivalent:
	\begin{enumerate}[\;\;\rm (a)]
		\item $\II_W$ is integral over $\II_Z$.
		\item[\;\;\rm(b$'$)] $s(Z, X) = s(W, X)$ viewed in $A_*(Z)$.
		\item $s(Z, X) = s(W, X)$ viewed in $A_*(X)$.
		\item $\deg_\LL\left(s^i(Z, X)\right) = \deg_\LL\left(s^i(W, X)\right)$ for all $i \ge 0$.
	\end{enumerate} 
\end{headthm}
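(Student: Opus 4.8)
The plan is to prove the cycle of implications $(a)\Rightarrow(b)\Rightarrow(c)\Rightarrow(a)$. The first implication is the birational invariance recalled in the introduction: since $W\subseteq Z$ forces $\II_Z\subseteq\II_W$, the hypothesis in $(a)$ that $\II_W$ is integral over $\II_Z$ means exactly that $\II_W\subseteq\overline{\II_Z}$, whence $\overline{\II_Z}=\overline{\II_W}$ and the two subschemes share the same integral closure. Birational invariance then yields $s(Z,X)=s(W,X)$ once both classes are pushed forward to $A_*(X)$. The implication $(b)\Rightarrow(c)$ is immediate, since $\deg_\LL(-)$ is additive on cycle classes and $s^i$ is the degree-$i$ graded piece of the total Segre class, so equality of the classes forces equality of all the numbers $\deg_\LL(s^i)$.

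All the content of the theorem lies in $(c)\Rightarrow(a)$, and here I would convert the numerical equalities of $(c)$ into an equality of effective divisors on a blow-up, to be deduced from a positivity argument. First I would pass to a normal variety $\widetilde X$ equipped with a projective birational morphism $\pi\colon\widetilde X\to X$ dominating both $\Bl_Z X$ and $\Bl_W X$, on which both ideal sheaves become invertible: $\II_Z\,\OO_{\widetilde X}=\OO(-E_Z)$ and $\II_W\,\OO_{\widetilde X}=\OO(-E_W)$. The containment $\II_Z\subseteq\II_W$ yields an inequality of effective Cartier divisors $E_Z\ge E_W\ge 0$, and, because the integral closure of an ideal is cut out by the divisorial valuations supplied by such a resolution, condition $(a)$ is equivalent to the equality $E_Z=E_W$.

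Next I would make $(c)$ explicit in these terms. Writing $n=\dim X$ and $H=c_1(\LL)$, the blow-up formula $s(Z,X)=\eta_*\!\big([E]/(1+E)\big)$ together with the projection formula identifies each $\deg_\LL(s^i(Z,X))$, up to a sign depending only on $i$ and $n$, with a mixed intersection number $(\pi^*H)^{a}\cdot E_Z^{\,b}$ on $\widetilde X$ with $a+b=n$, and likewise for $W$. Thus $(c)$ asserts precisely that all of these mixed numbers attached to $E_Z$ agree with the corresponding ones attached to $E_W$, i.e.\ that the full vector of mixed multiplicities of $\II_Z$ and of $\II_W$ relative to the ample class $\LL$ coincide. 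The remaining step is a Rees--Teissier-type criterion: for $E_Z\ge E_W$ with $H$ ample, coincidence of all these mixed numbers forces $E_Z=E_W$; this is the equality case of a Khovanskii--Teissier (Minkowski) inequality, and it translates back into $\II_Z$ being a reduction of $\II_W$, which is condition $(a)$.

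The step I expect to be the main obstacle is exactly this last positivity input. The resolution $\widetilde X$ need not be smooth and the divisors $E_Z,E_W$ need not be nef, so the Hodge-theoretic form of the Teissier inequality does not apply off the shelf. I would circumvent this either by cutting with general members of $|\LL^{\otimes m}|$ and inducting on $\dim X$ to reduce to the case of surfaces, or by localizing and invoking the equality case of the Minkowski inequality for integrally closed ideals in an equidimensional local ring, in the spirit of Teissier and Rees--Sharp. In either route the delicate point is to guarantee that equidimensionality, and the containment $E_Z\ge E_W$, are preserved under the sectioning, so that the numerical hypothesis of $(c)$ can be fed into the local criterion.
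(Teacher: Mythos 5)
Your implications (a)$\Rightarrow$(b)$\Rightarrow$(c) are fine and agree with the paper. Your reformulation of (c)$\Rightarrow$(a) is also legitimate as far as it goes: on a normal modification $\widetilde{X}$ dominating the normalized blow-ups (granting the reduction to the irreducible components of $X$ and to $X_{\rm red}$, which you do not address but which is manageable), condition (a) is equivalent to $E_Z=E_W$, and with your notation $n=\dim X$, $H=c_1(\LL)$, one has $\deg_\LL(s^i(Z,X))=(-1)^{i-1}(\pi^*H)^{n-i}\cdot E_Z^i$. The problem is that the entire content of the theorem now sits in the positivity step, and neither of the two routes you propose for it can work. The Khovanskii--Teissier/Minkowski route needs classes it does not have: $E_Z,E_W$ are not nef, $\pi^*H$ is big and nef but \emph{not} ample on $\widetilde{X}$, and $\II_Z,\II_W$ are not $\mm$-primary, so Rees--Sharp/Teissier equality does not apply. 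More fundamentally, the purely numerical statement your route would establish --- ``equal mixed numbers against a big and nef class force $E_Z=E_W$'' --- is \emph{false}: the paper's \autoref{examp} is exactly a counterexample ($E$ versus $kE$ on a two-step blow-up of $\PP^3_\kk$, where every mixed number against the big and nef class vanishes while $D=(k-1)E\neq 0$). So any correct argument must use more than positivity of $\pi^*H$; it must use that $H$ is ample downstairs and that $p\pi^*H-E_Z$ and $p\pi^*H-E_W$ are globally generated, and your outline never does.

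The hyperplane-induction fallback fails for a different, equally concrete reason: general members of $|\LL^{\otimes m}|$ miss every low-dimensional locus where integral dependence can fail. For $\II_Z=\II_p^2\subset\II_W=\II_p$ in $\PP^2_\kk$, every general line meets neither subscheme, so all restricted data are trivial, yet (a) fails; hence knowing integral dependence on general complete-intersection slices can never imply it on $X$. The induction would therefore have to terminate in a local multiplicity criterion for \emph{non-primary} ideals (Gaffney--Gassler, Polini--Trung--Ulrich--Validashti), and converting the single global hypothesis (c) into the pointwise local hypotheses those theorems require is precisely the content of the theorem being proved --- it is not supplied by your outline.

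For what it is worth, your framework can be repaired without any Minkowski-type input, by choosing the modification more carefully: take $\widetilde{X}$ to be the \emph{normalization of the blow-up of the product ideal} $\II_Z\II_W$. Then $A:=p\pi^*H-E_Z$ and $B:=p\pi^*H-E_W$ are nef (pullbacks of globally generated line bundles), condition (c) is equivalent to $(\pi^*H)^{n-k}A^k=(\pi^*H)^{n-k}B^k$ for all $k$, and expanding $A^k-B^k=-D\sum_j A^jB^{k-1-j}$ exhibits each difference as a sum of terms $(\pi^*H)^{n-k}D\,A^jB^{k-1-j}\ge 0$; hence all monomials $(\pi^*H)^aA^bB^c\cdot D$ with $a+b+c=n-1$ vanish. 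Since $m\pi^*H+A+B$ is \emph{ample} on $\widetilde{X}$ for $m\gg 0$ (it is the pullback, under the finite normalization map, of a relatively ample bundle twisted by enough of the ample $\LL$ on the base), the effective divisor $D$ has degree zero against an ample class and is therefore zero. This is a genuinely different and shorter argument than the paper's, which instead passes through van Gastel's comparison with Vogel and polar cycles (\autoref{thm_van_Gastel}, \autoref{lem_num_equiv}), identifies the relevant degrees with leading coefficients of Snapper polynomials computing $h^0(X,\II_Z^n\otimes\LL^{\otimes 2pn})$ (\autoref{cor_numerical_snapper}), and concludes by a local Hilbert-polynomial argument at a closed point of $\fProj_X(\sD/\sC_+\sD)$ (\autoref{prop_hard}).
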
		

In \autoref{thmA}, even if one is not interested in integral dependence, the equivalence (b) $\Leftrightarrow$ (c) seems interesting in its own right.  
We note that \autoref{thmA} is sharp in the sense that the ampleness hypothesis appears to be indispensable. 
\autoref{examp} illustrates this by exhibiting a big and nef line bundle for which the conclusion of \autoref{thmA} does not hold.

The proof of \autoref{thmA} relies on van Gastel’s foundational result \cite{VanGastel}, which establishes the connection between Fulton–MacPherson’s intersection theory \cite{FULTON_INTER} and St\"uckrad–Vogel’s intersection theory \cite{VOGEL,FOV}. 
Indeed, our starting point is van Gastel’s result expressing Segre classes in terms of Vogel cycles, and conversely expressing Vogel cycles in terms of Segre classes.
An advantage of considering the Vogel cycle is that it opens the door to applying technical positivity results, including the one established in \autoref{prop_hard}.

\smallskip

As an application of \autoref{thmA}, we show that Aluffi’s Segre zeta function \cite{aluffi2017segre} furnishes a criterion for the integral dependence of homogeneous ideals in a polynomial ring. 
This power series records information about the Segre classes that arise when the ideal is extended to projective spaces of arbitrarily large dimension (see \autoref{sect_zeta} for further details). 
We obtain the following corollary:

\begin{headcor}[\autoref{thm_main_zeta}]
	\label{corB}
	Let $I \subseteq J \subset R$ be two homogeneous ideals in a polynomial ring $R = \kk[x_0, \ldots,x_n]$.
	Then the following two conditions are equivalent:
	\begin{enumerate}[\;\;\rm (a)]
		\item $J$ is integral over $I$.
		\item $\zeta_I(t) = \zeta_J(t)$.
	\end{enumerate} 
\end{headcor}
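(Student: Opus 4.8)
The plan is to reduce \autoref{corB} to \autoref{thmA} applied to the projective spaces $\PP^N$ for all $N \gg 0$, which is the regime where the Segre zeta function lives. Write $R_N = \kk[x_0, \ldots, x_N]$ for $N \ge n$, and for a homogeneous ideal $\mathfrak a \subseteq R$ let $\mathfrak a R_N$ denote its extension generated by the same elements. By the construction recalled in \autoref{sect_zeta}, the coefficient of $t^i$ in $\zeta_I(t)$ equals the stable value of $\deg_{\OO_{\PP^N}(1)}\!\big(s^i(V(IR_N), \PP^N)\big)$ for $N \gg 0$. Consequently, setting $Z_N = V(IR_N)$ and $W_N = V(JR_N)$ (so that $W_N \subseteq Z_N$ in $\PP^N$, since $I \subseteq J$), the equality $\zeta_I(t) = \zeta_J(t)$ is equivalent to
\[
\deg_{\OO_{\PP^N}(1)}\!\big(s^i(Z_N, \PP^N)\big) = \deg_{\OO_{\PP^N}(1)}\!\big(s^i(W_N, \PP^N)\big) \quad \text{for all } i \ge 0 \text{ and all } N \gg 0.
\]

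Since $\PP^N$ is smooth, irreducible (in particular equidimensional) and projective, and $\OO_{\PP^N}(1)$ is ample, I may apply \autoref{thmA} on $X = \PP^N$ to the pair $W_N \subseteq Z_N$. The displayed degree condition is precisely condition (c) of \autoref{thmA}, so by the equivalence (a)$\Leftrightarrow$(c) it holds for a given $N$ if and only if the ideal sheaf $\II_{W_N} = \widetilde{JR_N}$ is integral over $\II_{Z_N} = \widetilde{IR_N}$. Thus $\zeta_I(t) = \zeta_J(t)$ if and only if $\widetilde{JR_N}$ is integral over $\widetilde{IR_N}$ on $\PP^N$ for all $N \gg 0$.

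It remains to match integral dependence of the ideal sheaves on $\PP^N$ with integral dependence of the ideals in $R$, and this is the main obstacle. On $\Proj R_N$ the ideal sheaf records the ideal only up to saturation, so a priori $\widetilde{JR_N} \subseteq \overline{\widetilde{IR_N}}$ detects merely $JR_N \subseteq \overline{IR_N} : \mathfrak{M}^\infty$ (integral dependence away from the irrelevant ideal $\mathfrak{M} = (x_0, \ldots, x_N)$), which is weaker than $JR_N \subseteq \overline{IR_N}$; this is exactly why the Segre class on the original $\PP^n$ does not suffice (e.g.\ for $I = \mm^2 \subseteq J = \mm$ both subschemes are empty in $\PP^n$, while $J$ is not integral over $I$). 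The gap is closed by adding variables: since integral closure commutes with polynomial extension one has $\overline{IR_N} = \overline{I}\, R_N$, and since associated primes extend as $\Ass(R_N / \overline{I}\,R_N) = \{\, \pp R_N : \pp \in \Ass(R/\overline{I})\,\}$, no associated prime of $\overline{IR_N}$ can equal $\mathfrak{M}$ once $N > n$ (each such prime is extended from $R$ and hence omits $x_{n+1}$). Therefore $\overline{IR_N}$ is $\mathfrak{M}$-saturated for $N \ge n+1$, whence $\widetilde{JR_N} \subseteq \overline{\widetilde{IR_N}}$ forces $JR_N \subseteq (\overline{IR_N})^{\sat} = \overline{IR_N}$.

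Finally I would assemble the equivalences. Using $\overline{IR_N} = \overline{I}\,R_N$ once more, $JR_N \subseteq \overline{IR_N}$ holds for some (equivalently every) $N \ge n$ if and only if $J \subseteq \overline{I}$ in $R$, i.e.\ $J$ is integral over $I$. Combining the steps: if $J$ is integral over $I$, then $\widetilde{JR_N}$ is integral over $\widetilde{IR_N}$ for every $N$, so the degree conditions hold and $\zeta_I = \zeta_J$; conversely, if $\zeta_I = \zeta_J$, then sheaf integral dependence holds for all large $N$, and by the saturation argument $JR_N \subseteq \overline{IR_N}$ for such $N$, giving $J \subseteq \overline{I}$. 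This yields (a)$\Leftrightarrow$(b). I expect the only delicate point to be the saturation argument of the third paragraph; everything else is a direct transcription of \autoref{thmA} together with the standard behaviour of integral closure and associated primes under polynomial extensions.
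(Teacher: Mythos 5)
Your proposal is correct, and its overall skeleton coincides with the paper's proof: both directions are funneled through \autoref{thmA} applied to the pair $W^N \subseteq Z^N$ inside $\PP_\kk^N$ for large $N$ (the paper proves (a)$\Rightarrow$(b) by citing birational invariance instead of \autoref{thmA}, but that is immaterial), and in both cases the crux is that the ideal-sheaf conclusion of \autoref{thmA} only determines ideals up to saturation, so one must rule out the irrelevant ideal $\mm = (x_0,\ldots,x_N)$ as an associated prime of the integral closures. Where you genuinely differ is in how that is done. The paper fixes $N \ge \max\{r,s\}$ (with $r,s$ the numbers of generators of $I,J$) and invokes McAdam's theorem that every associated prime of the integral closure $\overline{IR^N}$ has height at most the minimal number of generators of $I$, hence cannot equal $\mm$, whose height is $N+1$. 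You instead use that integral closure commutes with polynomial extension, $\overline{IR^N} = \overline{I}\,R^N$, together with the flat base-change description $\Ass\big(R^N/\overline{I}R^N\big) = \big\{\pp R^N : \pp \in \Ass(R/\overline{I})\big\}$, so every associated prime omits $x_{n+1},\ldots,x_N$ and again cannot be $\mm$ once $N > n$. Both arguments are valid: yours is more elementary (it avoids McAdam's height bound) and works for all $N > n$, while the paper's avoids the commutation fact $\overline{IR^N} = \overline{I}\,R^N$, needing only the easier contraction $\overline{IR^N} \cap R \subseteq \overline{I}$ (obtained, e.g., by specializing the extra variables to $0$) in its final step. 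The remaining ingredients of your argument --- identifying the coefficients of $\zeta_I$ with the degrees $\deg_{\OO_{\PP_\kk^N}(1)}\big(s^i(Z^N,\PP_\kk^N)\big)$, and passing from sheaf-level integral dependence to an inclusion into the saturation of $\overline{IR^N}$ --- match the paper's proof.
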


The idea of detecting integral dependence through numerical invariants originates in Rees’ seminal work \cite{REES}.
Rees proved that in an equidimensional and universally catenary Noetherian local ring $(R, \mm)$, two $\mm$-primary ideals $I \subseteq J$ have the same integral closure if and only if they share the same Hilbert–Samuel multiplicity.
Since then, the search for numerical criteria to characterize integral dependence has been an important research topic in algebraic geometry, commutative algebra and singularity
theory.
Much effort has gone into extending Rees’ theorem to arbitrary ideals, modules, and more broadly, to algebras (see \cite{BOEGER,TEISSIER_CYC,TEISSIER_RES2, KLEIMAN_THORUP_GEOM,KLEIMAN_THORUP_MIXED, GG, FLENNER_MANARESI,SUV_MULT, GAFFNEY, CIU, UV_CRIT_MOD,     UV_NUM_CRIT,   PTUV, cid2023relative, cidruiz2024polar, CRPU2, BLQ, das2024numerical}). 
In this direction, two notable results are: 
\begin{itemize}[\;--]
	\item Teissier's Principle of Specialization of Integral Dependence (PSID) \cite{TEISSIER_CYC}, \cite[Appendice I]{TEISSIER_RES2}.
	Given a map of germs of analytic spaces,  the PSID asserts that
	for a family of zero-dimensional ideals with constant Hilbert-Samuel multiplicity, a section is integrally
	dependent on the total family if and only if it is integrally dependent along the fibers corresponding to a dense open  subset of the base.
	\item An extension of Rees’ theorem to arbitrary ideals (in an equidimensional and universally catenary Noetherian local ring $(R, \mm)$) was established by Polini, Trung, Ulrich, and Validashti \cite{PTUV}, using the multiplicity sequence introduced by Achilles and Manaresi \cite{ACH_MANA}. (In the analytic setting, an analogous result had been obtained earlier by Gaffney and Gassler \cite{GG}.)
\end{itemize}
Note that these criteria are local in nature, whereas \autoref{thmA} furnishes a global criterion.

\medskip
 
  \noindent
\textbf{Outline.} 
The structure of the paper is as follows. 
In \autoref{sect_prelim}, we introduce the notation used throughout and recall the necessary preliminary results. 
\autoref{sect_van_Gastel} develops the relation between Segre classes and Vogel cycles and shows how this connection can be applied. 
The proof of \autoref{thmA} is given in \autoref{sect_main}. 
Finally, \autoref{sect_zeta} contains the proof of \autoref{corB}.

\section{Preliminaries and Notation}
\label{sect_prelim}

In this section, we set up the notation that is used throughout the paper. 
We also present some preliminary results regarding integral closure (for more details, the reader is referred to \cite{SwHu}, \cite{LJT}, \cite{VASC_INT}, \cite[\S 9.6.A]{LAZARSFELD2}).

\begin{chunk}
	We use the convention that $\binom{m}{-1}:=0$ for $m \ge 0$ and $\binom{-1}{-1}:=1$.
	This convention is used throughout this paper, including \autoref{thm_van_Gastel}.
\end{chunk}

\begin{chunk}
	Let $X$ be a proper scheme over  a field $\kk$.
	For any coherent sheaf $\mathcal{F}$ on $X$, we use the notation 
	$$
	h^i(X, \mathcal{F}) \;:=\; \dim_\kk\left(\HH^i(X, \mathcal{F})\right) \quad \text{ and } \quad \chi(\mathcal{F}) \;:=\; \sum_{i\ge 0} (-1)^i h^i(X, \mathcal{F}).
	$$
\end{chunk}

\begin{chunk}
	Let $X$ be a proper scheme over a field $\kk$.
	Let $\LL$  be a line bundle on $X$ and $\alpha \in A_d(X)$ be a class of pure dimension $d$. 
	The \emph{$\LL$-degree} of $\alpha$ is given by the intersection number
	$$
	\deg_\LL\left(\alpha\right) \;:=\; \int c_1(\LL)^d \smallfrown \alpha.
	$$
	For all $p \ge 1$, we have the equality $\deg_{\LL^{\otimes p}}(\alpha) = p^d \deg_\LL(\alpha)$.
\end{chunk}

\begin{chunk}
	We  recall some useful notation from \cite{KLEIMAN_THORUP_GEOM,KLEIMAN_THORUP_MIXED}.
	Let $X$ be a scheme, $W$ be a subscheme of $X$  and $\mathbf{S} \in Z_*(X)$ be a cycle on $X$. 
	We define the $W$-part $\mathbf{S}^W$ as follows: if $\mathbf{S}$ is the fundamental cycle of an integral closed subscheme $S \subset X$ of $X$, define $\mathbf{S}^W$ to be $\mathbf{S}$ if $W$ contains the generic point of $S$, and to be zero if not; then extend this definition by linearity.
\end{chunk}

\begin{chunk}
	Let $I$ be an ideal in a commutative ring $A$.
	The \emph{integral closure} $\overline{I} \subset A$ of $I$ is the ideal given by 
	$$
	\overline{\,I\,} \;:=\; \Big\lbrace
	\begin{array}{c|c}
		f \in A & f^n + a_1f^{n-1} + \cdots + a_n = 0 \;\text{ where $n\ge 1$ and $a_i \in I^i$ for $1 \le i \le n$}
	\end{array}
	  \Big\rbrace.
	$$
\end{chunk}

\begin{chunk}
	Let $X$ be a scheme and $\II \subset \OO_X$ be an ideal sheaf in $X$. 
	The \emph{integral closure} $\overline{\II} \subset \OO_X$ of $\II$ is the sheaf associated to the presheaf
	$$
	U \subset X \;\mapsto\; \overline{\II(U)} \subset \OO_X(U).
	$$
\end{chunk}

\begin{chunk}
	Let $X$ be a scheme and $\II \subset \OO_X$ be an ideal sheaf in $X$. 
	The \emph{Rees algebra} of $\II$ is given by 
	$$
	\Rees(\II) \;:=\; \bigoplus_{n \ge 0} \II^n.
	$$
	The \emph{blow-up} of $X$ along a closed subscheme $Z \subset X$ is given by $\Bl_Z(X) := \fProj_X\left(\Rees(\II_Z)\right)$. 
\end{chunk}

\begin{chunk}
	\label{chunk_int_dep}
	Let $X$ be a Noetherian scheme and $\II \subseteq \JJ \subset \OO_X$ be two ideal sheaves in $X$.
	We say that \emph{$\JJ$ is integral over $\II$} if one of the following equivalent conditions is satisfied:
	\begin{enumerate}[\rm \;\; (a)]
		\item $\overline{\II} = \overline{\JJ}$.
		\item $\II \cdot  \JJ^n = \JJ^{n+1}$ for all $n \gg 0$.
		\item  $\Rees\left(\JJ\right)$ is of finite type as a module over $\Rees\left(\II\right)$.
		\item The inclusion $\Rees(\II) \hookrightarrow \Rees(\JJ)$ induces a finite morphism $\Bl_W(X) \rightarrow \Bl_Z(X)$ of blow-ups, where $Z = V(\II) \subset X$ and $W = V(\JJ) \subset X$ are the corresponding closed subschemes of $X$. 
		\item $\fProj_X\Big(\Rees(\JJ)\big/\Rees(\II)_+\Rees(\JJ)\Big)=\varnothing$, where $\Rees(\II)_+ := \bigoplus_{n \ge 1} \II^n$.
	\end{enumerate}
\end{chunk}

\section{Segre classes, Vogel cycles, and polar cycles}
\label{sect_van_Gastel}

This section establishes the connection between Segre classes, Vogel cycles, and polar cycles. 
Our main tool will be the fundamental result of van Gastel  \cite{VanGastel} expressing Segre classes in terms of Vogel cycles and vice versa (also, see \cite[Chapter 2]{FOV}).
For any unexplained notation regarding intersection theory, the reader is referred to \cite{FULTON_INTER}.

\begin{setup}
	\label{setup_translations}
	Let $\kk$ be a field, $X$ be an equidimensional proper scheme over $\kk$, and $\LL$ be a line bundle on $X$.
	Let $d := \dim(X)$ be the dimension of $X$.
	Let $r \ge 1$ and $\underline{\sigma} = \sigma_1,\ldots,\sigma_r \in \HH^0(X, \LL)$ be global sections of $\LL$.
	Let $Z := V(\sigma_1) \cap \cdots \cap V(\sigma_r) \subset X$ be the closed subscheme given by intersecting the sets of zeros of the sections $\underline{\sigma}$.
\end{setup}

Consider the purely transcendental field extension 
$$
\llf \;:=\;  \kk\left(u_{i,j} \,\mid\,  1 \le i, j \le r\right)
$$
where $u_{i,j}$ are new indeterminates.
We write $X_\llf := X \times_{\Spec(\kk)} \Spec(\llf)$ and $Z_\llf := Z \times_{\Spec(\kk)} \Spec(\llf)$.
We now discuss the intersection algorithm that leads to the construction of the Vogel cycle.

\begin{definition}[{Intersection Algorithm; see \cite{VanGastel}, \cite[Chapter 2]{FOV}}]
	\label{def_inter_algo}
	Let $D_i := V(s_i)$ be the  set of zeros of the generic linear combination
	$$
	s_i \;:=\; \sum_{j=1}^r u_{i,j} \sigma_j.
	$$
	Inductively, we define two sequences $\nu^i(\underline{\sigma}, X)$ and $\beta^i(\underline{\sigma}, X)$ of cycles for $0 \le i \le r$.
	\begin{itemize}
		\item[\;\;-- (Initial Step):] Decompose $[X_\llf] = \nu^0(\underline{\sigma}, X) + \beta^0(\underline{\sigma}, X)$, where $\nu^0(\underline{\sigma}, X):=\left[X_\llf\right]^{Z_\llf}$ is the part of the fundamental cycle $[X_\llf]$ supported by $Z_\llf$, and $\beta^0(\underline{\sigma}, X):=\left[X_\llf\right]^{X_\llf\setminus Z_\llf}$ is the rest.
		\item[\;\;-- (Inductive Step):]
		Assume that $\nu^{i-1}(\underline{\sigma}, X)$ and $\beta^{i-1}(\underline{\sigma}, X)$ have been computed.
		We obtain that $D_i$ restricts to an effective Cartier divisor on $\beta^{i-1}(\underline{\sigma}, X)$.
		Consider the intersection of $\beta^{i-1}(\underline{\sigma}, X)$ with the divisor $D_i$.
		Decompose this intersection as
		\begin{equation}
			\label{eq_decomp_cycles}
			D_i \,\cdot\, \beta^{i-1}(\underline{\sigma}, X) \;=\; \nu^i(\underline{\sigma}, X) + \beta^i(\underline{\sigma}, X),
		\end{equation}
		 where 
		 $$
		 \nu^i(\underline{\sigma}, X) \;:=\; \left[D_i \,\cdot\, \beta^{i-1}(\underline{\sigma}, X)\right]^{Z_\llf}
		 $$ 
		 is the part of the intersection supported by $Z_\llf$, and 
		 $$
		 \beta^i(\underline{\sigma}, X) \;:=\; \left[D_i \,\cdot\, \beta^{i-1}(\underline{\sigma}, X)\right]^{X_\llf\setminus Z_\llf}
		 $$ 
		 is the rest.
	\end{itemize}
	We have that $\beta^r(\underline{\sigma}, X) = 0$, so we set $\nu^i(\underline{\sigma}, X) := 0$ and $\beta^i(\underline{\sigma}, X) := 0$ for $i > r$.
	The cycles $\nu^i(\underline{\sigma}, X)$ and $\beta^i(\underline{\sigma}, X)$ (if nonzero) are effective cycles of pure dimension $d-i$ on $Z_\llf$ and on $X_\llf$, respectively.
\end{definition}

\begin{definition}[{Vogel cycle; see \cite{VanGastel}, \cite[Chapter 2]{FOV}}]
	\label{def_vogel_cycle}
	The \emph{Vogel cycle} of $\underline{\sigma}$ on $X$ is the cycle 
	$$
	\nu(\underline{\sigma}, X) \;:=\; \sum_{i\ge 0}\nu^i(\underline{\sigma}, X) \;\;\in\;\; Z_*\left(Z_\llf\right).
	$$ 
\end{definition}

\begin{definition}[Polar cycle]
	The \emph{polar cycle} of $\underline{\sigma}$ on $X$ is the cycle 
	$$
	\beta(\underline{\sigma}, X) \;:=\; \sum_{i\ge 0}\beta^i(\underline{\sigma}, X) \;\in\; Z_*\left(X_\llf\right).
	$$
\end{definition}

The reason for calling $\beta(\underline{\sigma}, X)$ the polar cycle will become clear after \autoref{prop_blow_up_formula}.

\begin{remark}
	\label{rem_base_change}
By a result of Fulton (see \cite[Lemma 1.4]{VanGastel}, \cite[Proposition 2.1.8]{FOV}), we have a natural isomorphism 
$$
A_*(Y)  \;\xrightarrow{\;\cong\;} \; A_*\left(Y \times_{\Spec(\kk)} \Spec(\llf)\right), \qquad \left[V\right] \mapsto \left[V \times_{\Spec(\kk)} \Spec(\llf)\right],
$$
for any $\kk$-scheme $Y$ of finite type.
\end{remark}

Hence, under this natural isomorphism, we can make the identifications
$$
\nu(\underline{\sigma}, X) \;\in\; A_*(Z) \quad \text{ and } \quad \beta(\underline{\sigma}, X) \;\in\; A_*(X) 
$$
(i.e., we can consider rational equivalence classes over $\kk$ instead of  $\llf$).

Let $C:=C_ZX := \fSpec_Z(\gr_{\II_Z}(\OO_X))$ be the normal cone to $Z$ in $X$, where 
$$
\gr_{\II_Z}(\OO_X) \;:=\; \bigoplus_{n\ge 0} \II_Z^n/\II_Z^{n+1}
$$ 
is the associated graded ring of $\II_Z$.

\begin{definition}
	\label{def_Segre_class}
	Let $q : \PP\left(C \oplus 1\right) \rightarrow Z$ be the projective completion of the cone $C$.
	The \emph{$i$-th Segre class} of $Z$ in $X$ is given by 
	$$
	s^i(Z, X) \;:=\; s^i(C) \;:=\; q_*\Big(c_1(\OO_{\PP\left(C \oplus 1\right)}(1))^i \smallfrown \left[\PP\left(C \oplus 1\right)\right]\Big)
	$$ 
	where $\OO_{\PP\left(C \oplus 1\right)}(1)$ is the canonical line bundle on $\PP\left(C \oplus 1\right)$.
	Then 
	$$
	s(Z, X) \;:=\; \sum_{i\ge 0} s^i(Z, X)
	$$ 
	is the (total) \emph{Segre class} of $Z$ in $X$.
\end{definition}

\begin{remark}
	Since $X$ is equidimensional of dimension $d$, we get that $\PP\left(C \oplus 1\right)$ is also equidimensional of dimension $d$ (see, e.g., \cite[\S B.5]{FULTON_INTER}), and so it follows that $s^i(Z, X) \in A_{d-i}(Z)$.
\end{remark}

We also consider the following twisted version of the normal cone 
$$
C^\LL \;:=\; C_ZX \otimes \LL \;:=\; \fSpec_Z\left(\bigoplus_{n \ge 0} \II_Z^n/\II_Z^{n+1} \otimes \LL^{\otimes n}\right).
$$
An advantage of $C^\LL$ over $C$ is the following: the first graded part $\II_Z/\II_Z^2 \otimes \LL$ of $C^\LL$ is generated by global sections. 
Indeed, by construction $\II_Z \otimes \LL$ is generated by the sections $\sigma_1,\ldots,\sigma_r$, and so the coherent sheaf $\II_Z/\II_Z^2 \otimes \LL$ is generated by the initial forms $\sigma_1^*,\ldots,\sigma_r^* \in \HH^0\big(X, \II_Z/\II_Z^2 \otimes \LL\big)$.

We now present van Gastel's fundamental result relating Segre classes and Vogel cycles.

\begin{theorem}[{van Gastel; see \cite[Corollary 3.7]{VanGastel}, \cite[Corollary 2.4.7]{FOV}}]
	\label{thm_van_Gastel}
	Modulo rational equivalence on $Z$, for all $i \ge 0$, we have the equalities
	$$
	\nu^i(\underline{\sigma}, X) \;=\; s^i\big(C_ZX \otimes \LL\big) \;=\; \sum_{j=0}^i\binom{i-1}{j-1} c_1(\LL)^{i-j} \smallfrown s^j(Z, X) \; \in\; A_{d-i}(Z)
	$$
	and 
	$$
	s^i(Z, X) \;=\; \sum_{j=0}^i \binom{i-1}{j-1} {(-1)}^{i-j} c_1(\LL)^{i-j} \smallfrown \nu^j(\underline{\sigma}, X) \;\in\; A_{d-i}(Z).
	$$
	In particular, the Vogel cycle $\nu(\underline{\sigma}, X)$ and the Segre class $s(Z, X)$ determine each other modulo rational equivalence on $Z$.
\end{theorem}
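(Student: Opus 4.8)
The plan is to separate the statement into a geometric core and a formal combinatorial part. The geometric core is the first identification $\nu^i(\underline{\sigma}, X) = s^i(C_ZX \otimes \LL)$; the two closed-form expansions in $c_1(\LL)$ and the $s^j(Z,X)$ are then a \emph{twisting formula} for Segre classes of cones together with its inversion, and the concluding ``determine each other'' clause is an immediate consequence.

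I would first dispatch the two expansions, granting the identification $\nu^i = s^i(C^\LL)$. Write $\ell := c_1(\LL)$ and package the Segre classes into the formal series $\sigma(T) := \sum_{i\ge 0} s^i(Z,X)\,T^i$, where $T$ tracks the codimension index (so the coefficient of $T^i$ lies in $A_{d-i}(Z)$). Using the convention $\binom{m}{-1}=0$, $\binom{-1}{-1}=1$ and the identity $\sum_{k\ge 0}\binom{j+k-1}{k}x^k = (1-x)^{-j}$, a direct computation shows that the first displayed formula is equivalent to the substitution rule
$$
\sum_{i\ge 0} s^i(C^\LL)\,T^i \;=\; \sigma\!\left(\frac{T}{1-\ell\, T}\right),
$$
since the coefficient of $T^i$ in $(T/(1-\ell T))^j$ is $\binom{i-1}{i-j}\ell^{i-j}=\binom{i-1}{j-1}\ell^{i-j}$. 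This is the standard behaviour of the Segre class of a cone under a line-bundle twist (compare \cite[\S 4.1]{FULTON_INTER}), which I would either invoke directly or reprove by comparing the tautological bundles on $\PP(C\oplus 1)$ and $\PP(C^\LL\oplus 1)$. The transformation $T\mapsto T/(1-\ell T)$ is a formal power series with invertible linear term, and its compositional inverse is $T\mapsto T/(1+\ell T)$; expanding the latter reproduces exactly the signs $(-1)^{i-j}$ of the second displayed formula. As the two transformations are mutually inverse, knowing all $\nu^i(\underline{\sigma},X)$ is equivalent to knowing all $s^i(Z,X)$, which is precisely the final assertion.

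The main work, and the expected obstacle, is the geometric identification $\nu^i(\underline{\sigma},X)=s^i(C_ZX\otimes\LL)$. Here I would follow van Gastel and pass to the twisted cone. By hypothesis $\II_Z\otimes\LL$ is generated by $\sigma_1,\dots,\sigma_r$, so the initial forms $\sigma_1^*,\dots,\sigma_r^*$ generate $\II_Z/\II_Z^2\otimes\LL$, and hence the tautological bundle $\OO_{\PP(C^\LL\oplus 1)}(1)$ is globally generated. By definition $s^i(C^\LL)=q_*\big(c_1(\OO_{\PP(C^\LL\oplus 1)}(1))^i\smallfrown[\PP(C^\LL\oplus 1)]\big)$, so the generic linear combinations $s_i=\sum_j u_{i,j}\sigma_j$ are exactly a system of generic sections realizing the successive self-intersections of this globally generated bundle. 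The plan is then to prove, by induction on $i$, that the decomposition $D_i\cdot\beta^{i-1}(\underline{\sigma},X)=\nu^i(\underline{\sigma},X)+\beta^i(\underline{\sigma},X)$ matches this picture: the residual $\beta^i$ records the part of the generic complete intersection lying off $Z_\llf$, while the $Z_\llf$-supported part $\nu^i$ computes $c_1(\OO(1))^i\smallfrown[\,\cdot\,]$ after pushforward by $q$. The genericity built into the purely transcendental extension $\llf$ should guarantee that each intersection $D_i\cdot\beta^{i-1}$ is proper of the expected dimension $d-i$ with the correct multiplicities, so that the St\"uckrad--Vogel cycle-theoretic residuals agree with the Chow-theoretic intersections; the base-change isomorphism $A_*(Z)\cong A_*(Z_\llf)$ of \autoref{rem_base_change} then lets one read the outcome back over $\kk$. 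The delicate point is precisely this bookkeeping of the residual cones $\beta^i$ and the verification that ``taking the $Z$-part'' commutes with the intersection operations in the required sense; controlling it is the technical heart of the argument and where the real effort lies.
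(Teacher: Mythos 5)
First, a point of comparison: the paper does \emph{not} prove this theorem. It is quoted as a known result (van Gastel \cite[Corollary 3.7]{VanGastel}, \cite[Corollary 2.4.7]{FOV}), and everything downstream (\autoref{prop_blow_up_formula}, \autoref{thm_main}) uses it as a black box. So your attempt can only be judged as a standalone reconstruction, and as such it is half complete.

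The formal half of your argument is correct. Writing $\sigma(T)=\sum_j s^j(Z,X)T^j$, the first display is indeed the substitution $T\mapsto T/(1-\ell T)$, its compositional inverse is $T\mapsto T/(1+\ell T)$, the coefficient extraction $\binom{i-1}{i-j}=\binom{i-1}{j-1}$ is right, and the boundary conventions $\binom{m}{-1}=0$, $\binom{-1}{-1}=1$ are handled consistently (the $j=0$ term survives only when $i=0$, as it must). The mutual-inverse observation does give the final ``determine each other'' clause. The twisting formula you need as input, $s^i(C\otimes\LL)=\sum_{j}\binom{i-1}{j-1}c_1(\LL)^{i-j}\smallfrown s^j(C)$, is standard and citable (it is essentially \cite[Lemma 2.4.2]{FOV}); one caveat is that your proposed re-derivation ``by comparing the tautological bundles on $\PP(C\oplus 1)$ and $\PP(C^{\LL}\oplus 1)$'' is more delicate than you suggest, because $C^{\LL}\oplus 1\neq (C\oplus 1)\otimes\LL$ (the latter is $C^{\LL}\oplus\LL$); the mismatch between these two compactifications is exactly the source of the binomial coefficients, so this step cannot be waved through.

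The genuine gap is the identification $\nu^i(\underline{\sigma},X)=s^i(C_ZX\otimes\LL)$, which you present as the ``expected obstacle'' and then only outline. That identification is not bookkeeping around your formal computation --- it \emph{is} van Gastel's theorem; everything else in the statement is elementary by comparison. What is concretely missing is the mechanism transporting the intersection algorithm (which lives on $X_{\llf}$ and repeatedly extracts $Z_{\llf}$-parts) to the cone side where Segre classes live. The actual argument runs through the blow-up: a generic section $s_i$ pulls back on $\cB^{\LL}$ to a divisor of class $c_1\big(\OO_{\cB^{\LL}}(1)\big)$ decomposing as exceptional divisor plus a residual part; one proves by induction that the strict transform of $\beta^{i-1}(\underline{\sigma},X)$ represents $c_1\big(\OO_{\cB^{\LL}}(1)\big)^{i-1}\smallfrown[\cB]$ (genericity of the $u_{i,j}$ enters here, to guarantee properness and that cycle-theoretic residuals agree with Chow-theoretic intersections), and that the $Z_{\llf}$-part split off at step $i$ is $\eta_*\big(c_1\big(\OO_{\PP(C^{\LL})}(1)\big)^{i-1}\smallfrown[\PP(C^{\LL})]\big)$, which the twisting formula then converts into $s^i(C^{\LL})$. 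None of this is carried out in your sketch, and you say so yourself. As written, your proposal proves the invertible change of variables between the two displays and defers the substantive assertion; to be defensible it should either cite \cite{VanGastel}/\cite{FOV} for $\nu^i=s^i(C^{\LL})$ --- which is what the paper does --- or actually execute the blow-up induction.
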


Let $\mathcal{B} := \Bl_Z(X) = \fProj_X\left(\Rees(\II_Z)\right)$ be the blow-up of $X$ along $Z$, where $\Rees(\II_Z) = \bigoplus_{n\ge 0} \II_Z^n$ is the Rees algebra of the ideal sheaf $\II_Z \subset \OO_X$.
Let $\pi : \mathcal{B} \rightarrow X$ be the natural projection. 
Consider the twisted Rees algebra 
$$
\Rees^\LL(\II_Z) \;:=\; \bigoplus_{n\ge 0} \II_Z^n \otimes \LL^{\otimes n}
$$
and the twisted blow-up $\mathcal{B}^\LL := \Bl_Z^\LL(X) := \fProj_X\left(\Rees^\LL(\II_Z)\right)$.
We have a natural isomorphism $\mathcal{B}^\LL \cong \mathcal{B}$ with the identification 
\begin{equation}
	\label{eq_can_line_O(1)}
	\OO_{\mathcal{B}^\LL}(1) \;\cong\; \OO_{\mathcal{B}}(1) \otimes \pi^*(\LL);
\end{equation}
see, e.g., \cite[Lemma II.7.9]{HARTSHORNE}.

\begin{remark}
	\label{rem_nice_immersion}
	Since $\II_Z \otimes \LL$ is generated by the global sections $\sigma_1,\ldots,\sigma_r$, we obtain a natural surjection 
	$
	\Sym\left(\OO_X^r\right) \;\surjects \; \Rees^\LL(\II_Z)
	$
	that yields the closed immersion 
	$$
	\mathcal{B}^\LL = \fProj_X\left(\Rees^\LL(\II_Z)\right) \;\subset\; \fProj_X\left(\Sym\left(\OO_X^r\right)\right) \;=\; X \times_\kk \PP_{\kk}^{r-1}.
	$$
	Denoting by $p : \mathcal{B}^\LL \subset X \times_\kk \PP_{\kk}^{r-1} \rightarrow \PP_{\kk}^{r-1}$ the natural projection, we get $\OO_{\mathcal{B}^\LL}(1) \cong p^*\big(\OO_{\PP_{\kk}^{r-1}}(1)\big)$.
	We have that $\OO_{\mathcal{B}^\LL}(1)$ is very ample over $X$.
\end{remark}

\begin{remark}
	\label{rem_blowup_irr_comp}
	The irreducible components of $\cB=\Bl_Z(X)$ are mapped (via the projection $\pi$) birationally onto the irreducible components of $X$ not contained in $Z$.
	Consequently, since $X$ is equidimensional of dimension $d$, it follows that either $\cB$ is equidimensional of dimension $d$ or $\cB = \varnothing$ (the latter case happens when $Z_{\rm red} = X_{\rm red}$).
\end{remark}

The following proposition gives a blow-up formula for the polar cycle $\beta(\underline{\sigma}, X)$ (cf., \cite[Lemma 2.2]{GG}, \cite[Theorem C(i)]{CRPU2}).
This blow-up formula is a consequence of \autoref{thm_van_Gastel}.

\begin{proposition}[Blow-up formula]
	\label{prop_blow_up_formula}
	Modulo rational equivalence on $X$, for all $i \ge 0$, we have the equality
	$$
	\beta^i(\underline{\sigma}, X) \;=\; \pi_*\left(c_1\left(\OO_{\mathcal{B}^\LL}(1)\right)^i \smallfrown \left[\Bl_Z(X)\right]\right) \;\in\; A_{d-i}(X). 
	$$
\end{proposition}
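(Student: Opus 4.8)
The plan is to prove the formula by induction on $i$, showing that the polar cycles $\beta^i(\underline{\sigma}, X)$ and the right-hand classes
$$
\gamma^i \;:=\; \pi_*\!\big(c_1(\OO_{\mathcal{B}^\LL}(1))^i \smallfrown [\Bl_Z(X)]\big) \;\in\; A_{d-i}(X)
$$
obey one and the same recursion with one and the same base case. If $\Bl_Z(X) = \varnothing$, i.e.\ $Z_{\rm red} = X_{\rm red}$ in the situation of \autoref{rem_blowup_irr_comp}, then $\gamma^i = 0$ and likewise every $\beta^i(\underline{\sigma}, X) = 0$, so I may assume $\mathcal{B} = \Bl_Z(X)$ is equidimensional of dimension $d$. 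Throughout I work modulo rational equivalence, freely identify cycles over $\llf$ with cycles over $\kk$ via \autoref{rem_base_change}, and abbreviate $\xi := c_1(\OO_{\mathcal{B}^\LL}(1))$.

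First I would extract the recursion for the polar cycle directly from the intersection algorithm. The decomposition \eqref{eq_decomp_cycles} reads $D_i \cdot \beta^{i-1}(\underline{\sigma},X) = \nu^i(\underline{\sigma},X) + \beta^i(\underline{\sigma},X)$, and since $D_i = V(s_i)$ is the zero scheme of a global section of $\LL$, intersecting with $D_i$ agrees with the operation $c_1(\LL) \smallfrown (-)$ modulo rational equivalence. Hence
$$
\beta^i(\underline{\sigma},X) \;=\; c_1(\LL)\smallfrown \beta^{i-1}(\underline{\sigma},X) \;-\; \nu^i(\underline{\sigma},X), \qquad i \ge 1,
$$
while $\beta^0(\underline{\sigma},X) = [X_\llf] - \nu^0(\underline{\sigma},X)$ is the sum of the components of $X$ not contained in $Z$; by \autoref{rem_blowup_irr_comp} this equals $\pi_*[\mathcal{B}] = \gamma^0$.

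Next I would derive the identical recursion for $\gamma^i$. Peeling off one factor of $\xi$ and using \eqref{eq_can_line_O(1)} to write $\xi = \pi^*c_1(\LL) + c_1(\OO_{\mathcal{B}}(1))$, the projection formula converts the $\pi^*c_1(\LL)$-term into $c_1(\LL) \smallfrown \gamma^{i-1}$. For the remaining term I use $c_1(\OO_{\mathcal{B}}(1)) \smallfrown [\mathcal{B}] = -[E]$ (valid because $\OO_{\mathcal{B}}(1) = \OO_{\mathcal{B}}(-E)$, with $E$ the exceptional divisor) to obtain a class supported on $E = \PP(C)$, giving
$$
\gamma^i \;=\; c_1(\LL)\smallfrown \gamma^{i-1} \;-\; \eta_*\!\big((\xi|_E)^{i-1} \smallfrown [E]\big), \qquad \eta := \pi|_E.
$$
It then remains to identify the last term with $\nu^i(\underline{\sigma},X)$.

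This identification is the crux, and it is where \autoref{thm_van_Gastel} enters. Restricting \eqref{eq_can_line_O(1)} to $E$ gives $\xi|_E = \zeta + \eta^*c_1(\LL)$ with $\zeta := c_1\big(\OO_{\mathcal{B}}(1)|_E\big) = c_1(\OO_{\PP(C)}(1))$; moreover the birational-invariance formula recalled in the introduction, $s(Z,X) = \eta_*\big([E]/(1+E)\big)$, unwinds to $s^j(Z,X) = \eta_*(\zeta^{j-1}\smallfrown [E])$ for $j \ge 1$. Expanding $(\xi|_E)^{i-1}$ by the binomial theorem, pushing forward along $\eta$, and applying the projection formula together with this last identity yields $\sum_{j=1}^{i}\binom{i-1}{j-1}c_1(\LL)^{i-j}\smallfrown s^j(Z,X)$; since $\binom{i-1}{-1}=0$ for $i\ge 1$, this is exactly van Gastel's expression for $\nu^i(\underline{\sigma},X)$ in \autoref{thm_van_Gastel}. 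Thus $\gamma^i = c_1(\LL)\smallfrown\gamma^{i-1} - \nu^i(\underline{\sigma},X)$, which matches the recursion for $\beta^i(\underline{\sigma},X)$, and induction completes the argument. I expect the main obstacle to be precisely this binomial bookkeeping — matching the $\eta$-pushforward of $(\xi|_E)^{i-1}\smallfrown[E]$ against van Gastel's twisted-Segre formula while tracking the degree shift and the $\binom{\cdot}{\cdot}$ conventions — with a secondary subtlety in justifying that intersection with the Cartier divisor $D_i$ over $\llf$ descends to the operation $c_1(\LL)\smallfrown(-)$ over $\kk$.
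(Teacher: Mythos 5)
Your proof is correct and takes essentially the same route as the paper's: the same induction on $i$ with base case from \autoref{rem_blowup_irr_comp}, the same splitting $c_1\left(\OO_{\mathcal{B}^\LL}(1)\right) = \pi^*c_1(\LL) + c_1\left(\OO_{\mathcal{B}}(1)\right)$ combined with the projection formula and the exceptional-divisor identity, and the same final comparison with the recursion \autoref{eq_decomp_cycles}. The only difference is cosmetic: where the paper identifies the exceptional term with $\nu^i(\underline{\sigma},X)$ by applying van Gastel's first equality $\nu^i(\underline{\sigma},X) = s^i\big(C_ZX \otimes \LL\big)$ directly to the twisted cone, you binomially expand $\left(\zeta + \eta^*c_1(\LL)\right)^{i-1}$ and match against the binomial expression in \autoref{thm_van_Gastel} --- equivalent bookkeeping, both resting on the same theorem (and your untwisted identity $s^j(Z,X) = \eta_*\left(\zeta^{j-1}\smallfrown [E]\right)$ is justified in the equidimensional-scheme setting by exactly the argument the paper cites for the twisted cone).
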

\begin{proof}
	We proceed by induction of $i$.
	By \autoref{rem_blowup_irr_comp}, it follows that $\pi_*\left(\left[\cB\right]\right) = [X]^{X\setminus Z} = \beta^0(\underline{\sigma}, X)$. 
	Thus the initial case $i = 0$ is clear.

 	By utilizing \cite[Lemma 1.7.2]{FULTON_INTER}, \cite[\S B.5]{FULTON_INTER} and the fact that $\PP(C^\LL\oplus 1)$ is equidimensional, we obtain $c_1\big(\OO_{\PP(C^\LL \oplus 1)}(1)\big) \smallfrown [\PP(C^\LL \oplus 1)] = [\PP(C^\LL)]$.
 	Notice that $\eta : \PP(C) \cong \PP(C^\LL) \rightarrow Z$ is the exceptional divisor of the blow-up $\mathcal{B}=\Bl_Z(X)$;
 	 again, we have the similar identification $\OO_{\PP(C^\LL)}(1) \cong \OO_{\PP(C)}(1) \otimes \eta^*(\LL)$.
 	Thus, for $i \ge 1$, \autoref{thm_van_Gastel} yields  
	\begin{equation}
		\label{eq_vogel_cycle_exc_div}
		\nu^i(\underline{\sigma}, X) \;=\; \eta_*\left(c_1\big(\OO_{\PP(C^\LL)}(1)\big)^{i-1} \smallfrown [\PP(C)]\right).
	\end{equation}
 	As $\PP(C)$ is the exceptional divisor of $\mathcal{B}$, we have 
 	\begin{equation}
 		\label{eq_exc_div_chern}
 		c_1(\OO_\mathcal{B}(-1)) \smallfrown \left[\mathcal{B}\right] = \left[\PP(C)\right].
 	\end{equation}
 	We can now compute 
 	\begin{align*}
		\pi_*\left(c_1\left(\OO_{\mathcal{B}^\LL}(1)\right)^i \smallfrown \left[\mathcal{B}\right]\right) &= \pi_*\left(\big(c_1\left(\OO_{\mathcal{B}}(1)\right) + c_1\left(\pi^*(\LL)\right)\big)c_1\left(\OO_{\mathcal{B}^\LL}(1)\right)^{i-1}\smallfrown \left[\mathcal{B}\right]\right)   \text{ \;\quad (by \autoref{eq_can_line_O(1)})} \\
		&= -\nu^i(\underline{\sigma}, X) + \pi_*\left( c_1\left(\pi^*(\LL)\right)c_1\left(\OO_{\mathcal{B}^\LL}(1)\right)^{i-1}\smallfrown \left[\mathcal{B}\right]\right)  \text{\;\quad (by \autoref{eq_vogel_cycle_exc_div} and \autoref{eq_exc_div_chern})} \\
		&= -\nu^i(\underline{\sigma}, X) + c_1\left(\LL\right)\smallfrown \pi_*\left(c_1\left(\OO_{\mathcal{B}^\LL}(1)\right)^{i-1} \smallfrown \left[\mathcal{B}\right]\right)  \text{\;\; (projection formula)}  \\
		&= -\nu^i(\underline{\sigma}, X) + c_1\left(\LL\right)\smallfrown \beta^{i-1}(\underline{\sigma}, X) \text{\;\quad (by induction).}
 	\end{align*}
 	Finally, by comparing with  \autoref{eq_decomp_cycles}, the equality $\beta^i(\underline{\sigma}, X) \;=\; \pi_*\big(c_1\left(\OO_{\mathcal{B}^\LL}(1)\right)^i \smallfrown \left[\mathcal{B}\right]\big)$ follows
 	(indeed, we have $D_i \cdot \beta^{i-1}(\underline{\sigma}, X)=c_1(\LL) \smallfrown \beta^{i-1}(\underline{\sigma}, X)$).
\end{proof}

\begin{remark}[see \cite{CRPU2}]
Let $R$ be a Noetherian local ring with infinite residue field and $I \subset R$ be an ideal. 
Let $b : B = \Proj(\Rees(I)) \rightarrow \Spec(R)$ be the blow-up of $\Spec(R)$ along $I$.
Let $g_1,\ldots,g_i \in I$ be a sequence of general elements of $I$ and $H_1,\ldots, H_i$ be the corresponding sequence of general hyperplanes on the blow-up $B$.
The \emph{polar cycle} is the cycle associated to the polar scheme $P_i(I, \Spec(R)) := b\left(H_1\cap \cdots \cap H_i\right) \subset \Spec(R)$.
In our global setting, the right-hand side of the blow-up formula in \autoref{prop_blow_up_formula} is obtained, modulo rational equivalence, by intersecting $i$ times with the hyperplane class on the twisted blow-up and then pushing forward to $X$.
On the other hand, the  cycle of $\Spec\left(R/(g_1,\ldots,g_i): I^\infty\right)$ is obtained from the cycle of $\Spec\left(R/(g_1,\ldots,g_i)\right)$ by keeping the components not contained in $V(I)$.
Then \cite[Theorem C(i)]{CRPU2} shows that $P_i(I, \Spec(R))$ equals $\Spec\left(R/(g_1,\ldots,g_i):I^\infty\right)$. 
Thus, modulo rational equivalence, \autoref{prop_blow_up_formula} shows that $\beta^i(\underline{\sigma}, X)$ is the global analogue of the local polar cycle, which justifies the terminology.
\end{remark}

Next, we relate the $\LL$-degrees of the polar cycles $\beta^i(\underline{\sigma}, X)$ to certain Snapper polynomials.
Let 
$$
P_{\LL, Z}(m, n) \;:=\;  \chi\left(\pi^*(\LL)^{\otimes m} \otimes \OO_{\mathcal{B}^\LL}(n)\right) 
$$
be the Snapper polynomial of the line bundles $\pi^*(\LL)$ and $\OO_{\mathcal{B}^\LL}(1)$ on $\mathcal{B} = \Bl_Z(X)$ (see \cite[Appendix B]{KLEIMAN_PICARD}, \cite{KLEIMAN_NUM}, \cite{SNAPPER} for more details on Snapper polynomials).

\begin{corollary}
	\label{cor_numerical_snapper}
	The following statements hold:
	\begin{enumerate}[\rm (i)]
		\item The terms of $P_{\LL, Z}(m, n)$ of degree $d$ are given as follows
		$$
		P_{\LL, Z}(m, n) \;=\; \sum_{i=0}^d \frac{\deg_\LL\left(\beta^i(\underline{\sigma}, X)\right)}{i!(d-i)!} \,m^{d-i} n^{i} \;+\; \text{{\rm(}lower degree terms{\rm)}}.
		$$
		\item For all $m \in \ZZ$ and $n \gg 0$, we have the equality
		$$
		P_{\LL, Z}(m, n) \;=\; \chi\big(\II_Z^n \otimes \LL^{\otimes (m+n)}\big). 
		$$
		\item If $\LL$ is very ample on $X$, we get the equality
		$$
		P_{\LL, Z}(m, n) \;=\; h^0\big(X, \II_Z^n \otimes \LL^{\otimes (m+n)}\big)
		$$
		for all $m \gg 0$ and $n \gg 0$.
	\end{enumerate}
\end{corollary}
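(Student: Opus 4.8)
The plan is to treat the three parts in increasing order of difficulty, reducing each to a standard cohomological or intersection-theoretic tool. Note first that $\mathcal{B}=\Bl_Z(X)$ is either equidimensional of dimension $d$ or empty (\autoref{rem_blowup_irr_comp}); in the latter case $\II_Z$ is nilpotent and every assertion collapses to $0=0$, so I assume $\mathcal{B}\neq\varnothing$. For (i), the theory of Snapper polynomials (asymptotic Riemann--Roch) shows that $P_{\LL,Z}(m,n)=\chi\big(\pi^*(\LL)^{\otimes m}\otimes\OO_{\mathcal{B}^\LL}(n)\big)$ agrees with a numerical polynomial of total degree $\le d$ whose degree-$d$ homogeneous part has coefficients built from the mixed intersection numbers $\int c_1(\pi^*\LL)^{d-i}\,c_1(\OO_{\mathcal{B}^\LL}(1))^i\smallfrown[\mathcal{B}]$. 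I would then rewrite each such number using \autoref{prop_blow_up_formula}: since $\beta^i(\underline{\sigma},X)=\pi_*\big(c_1(\OO_{\mathcal{B}^\LL}(1))^i\smallfrown[\mathcal{B}]\big)$, the projection formula together with $c_1(\pi^*\LL)=\pi^*c_1(\LL)$ and the invariance of the degree of a zero-cycle under proper pushforward gives $\int c_1(\pi^*\LL)^{d-i}c_1(\OO_{\mathcal{B}^\LL}(1))^i\smallfrown[\mathcal{B}]=\deg_\LL\big(\beta^i(\underline{\sigma},X)\big)$, which is precisely the claimed coefficient. This step is routine once the leading-term formula is invoked.

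For (ii), I would exploit that $\OO_{\mathcal{B}^\LL}(1)$ is $\pi$-ample. Relative Serre vanishing yields $R^j\pi_*\OO_{\mathcal{B}^\LL}(n)=0$ for $j>0$ and $n\gg0$, and since $\Rees^\LL(\II_Z)$ is generated in degree one, the natural map identifies $\pi_*\OO_{\mathcal{B}^\LL}(n)$ with the degree-$n$ piece $\II_Z^n\otimes\LL^{\otimes n}$ for $n\gg0$. Applying the projection formula to $\pi^*(\LL)^{\otimes m}\otimes\OO_{\mathcal{B}^\LL}(n)$ pulls the factor $\LL^{\otimes m}$ out of the pushforward, so for $n\gg0$ (a bound independent of $m$, since $m$ enters only through that factor) the total derived pushforward is concentrated in degree zero and equals $\II_Z^n\otimes\LL^{\otimes(m+n)}$. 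The Leray spectral sequence together with the additivity of Euler characteristics then gives $P_{\LL,Z}(m,n)=\chi\big(\II_Z^n\otimes\LL^{\otimes(m+n)}\big)$ for all $m$ and $n\gg0$.

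For (iii), part (ii) reduces the claim to the vanishing $H^j\big(X,\II_Z^n\otimes\LL^{\otimes(m+n)}\big)=0$ for $j>0$ once $m,n\gg0$, and the main obstacle I anticipate is making this vanishing \emph{uniform in the two indices simultaneously} --- this is exactly where I expect the very ampleness hypothesis to be indispensable, since ordinary Serre vanishing only controls one index at a time. I would use very ampleness of $\LL$ to fix a closed embedding $X\hookrightarrow\PP^N_\kk$ with $\OO_{\PP^N}(1)|_X\cong\LL$; combined with \autoref{rem_nice_immersion} this produces a closed embedding $\mathcal{B}^\LL\hookrightarrow\PP^N_\kk\times_\kk\PP^{r-1}_\kk$ under which $\pi^*(\LL)^{\otimes m}\otimes\OO_{\mathcal{B}^\LL}(n)$ is the restriction of $\OO_{\PP^N}(m)\boxtimes\OO_{\PP^{r-1}}(n)$. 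The uniform statement I need is that the higher cohomology of the $(m,n)$-twist of a coherent sheaf on a product of two projective spaces vanishes for $m,n\gg0$; I would derive this from a finite bigraded free resolution of the bigraded module attached to $\mathcal{B}^\LL$ over the bigraded polynomial ring, reducing to the elementary Künneth vanishing $H^{>0}\big(\PP^N_\kk\times_\kk\PP^{r-1}_\kk,\OO(a,b)\big)=0$ for $a,b\ge0$. Transporting this back through the Leray identification of (ii) shows $H^j\big(X,\II_Z^n\otimes\LL^{\otimes(m+n)}\big)=0$ for $j>0$ and $m,n\gg0$, whence $P_{\LL,Z}(m,n)=\chi\big(\II_Z^n\otimes\LL^{\otimes(m+n)}\big)=h^0\big(X,\II_Z^n\otimes\LL^{\otimes(m+n)}\big)$, as desired.
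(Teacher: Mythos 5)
Your proposal is correct, and for parts (i) and (ii) it follows the paper's proof essentially verbatim: the Kleiman--Snapper leading-term formula combined with the projection formula and \autoref{prop_blow_up_formula} for (i), and relative Serre vanishing plus the projection formula and the Leray spectral sequence for (ii), including the correct observation that the bound on $n$ is uniform in $m$ because $m$ enters only through the pulled-back factor $\pi^*(\LL)^{\otimes m}$. The only genuine divergence is in (iii): both arguments use very ampleness to produce the closed immersion $\cB^\LL \hookrightarrow \PP_\kk^\ell \times_\kk \PP_\kk^{r-1}$ under which $\pi^*(\LL)^{\otimes m}\otimes\OO_{\cB^\LL}(n)$ is the restriction of $\OO(m,n)$, but at that point the paper simply cites Hyry's multigraded vanishing theorem \cite[Theorem 1.6]{HYRY_MULTGRAD} to kill the higher cohomology for $m,n\gg 0$, whereas you prove the needed uniform vanishing by hand, taking a finite bigraded free resolution of the bigraded module of $\iota_*\OO_{\cB}$ over the Cox ring of $\PP^\ell\times\PP^{r-1}$, twisting, and dimension-shifting down to the K\"unneth vanishing $\HH^{>0}\big(\PP^\ell\times\PP^{r-1},\OO(a,b)\big)=0$ for $a,b\ge 0$. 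Your version is self-contained and elementary (and is a standard way to prove exactly the special case of Hyry's theorem being invoked), at the cost of a slightly longer argument; the paper's citation buys brevity and a statement valid in greater multigraded generality, which it reuses later in the proof of \autoref{prop_hard}. Your preliminary reduction to $\cB\neq\varnothing$ is harmless and correctly disposes of the degenerate case.
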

\begin{proof}
	(i) By \cite[Theorem B.7, Definition B.8]{KLEIMAN_PICARD}, we have 
	$$
	\chi\left(\pi^*(\LL)^{\otimes m} \otimes \OO_{\mathcal{B}^\LL}(n)\right) \;=\; \sum_{i=0}^d \frac{a_{i}}{i!(d-i)!} \,m^{d-i} n^{i} \;+\; \text{(lower degree terms)}
 	$$
 	where $a_{i}= \int c_1\left(\pi^*(\LL)\right)^{d-i} c_1\left(\OO_{\mathcal{B}^\LL}(1)\right)^i \smallfrown \left[\mathcal{B}\right]$.
 	Then the equalities 
 	$$
 	a_i \;=\; \int c_1\left(\LL\right)^{d-i}  \smallfrown \pi_*\left( c_1\left(\OO_{\mathcal{B}^\LL}(1)\right)^{i} \smallfrown \left[\mathcal{B}\right]\right) \;=\; \deg_\LL\left(\beta^i(\underline{\sigma}, X)\right)
 	$$
 	follow from the projection formula and \autoref{prop_blow_up_formula}.
 	
 	(ii)
 	Let $n \gg 0$ and $m \in \ZZ$.
 	We obtain that $\pi_*\left(\OO_{\mathcal{B}^\LL}(n)\right) \cong \II_Z^n \otimes \LL^{\otimes n}$ and $R^i\pi_*\left(\OO_{\mathcal{B}^\LL}(n)\right)=0$ for $i \ge 1$ (see, e.g., \cite[\S III.8]{HARTSHORNE}).
 	Therefore, the Leray spectral sequence (see, e.g., \cite[\href{https://stacks.math.columbia.edu/tag/01EY}{Tag 01EY}]{stacks-project})
 	$$
 	E_2^{p,q} = \HH^p\left(X,\, \LL^{\otimes m}\otimes R^q\pi_*\left( \OO_{\mathcal{B}^\LL}(n)\right)\right) \;\Longrightarrow\; \HH^{p+q}\left(\mathcal{B},\, \pi^*(\LL)^{\otimes m} \otimes \OO_{\mathcal{B}^\LL}(n)\right)
 	$$
 	degenerates in the second page.
 	Notice that by the projection formula (see, e.g., \cite[Exercise III.8.3]{HARTSHORNE}) we have the isomorphism $\LL^{\otimes m}\otimes R^q\pi_*\left( \OO_{\mathcal{B}^\LL}(n)\right) \cong R^q\pi_*\left( \pi^*(\LL)^{\otimes m} \otimes  \OO_{\mathcal{B}^\LL}(n)\right)$.
 	This yields the isomorphism $\HH^i\left(\cB, \pi^*(\LL)^{\otimes m} \otimes \OO_{\mathcal{B}^\LL}(n)\right) \cong \HH^i\left(X, \II_Z^n \otimes \LL^{\otimes(m+n)}\right)$.
 	Hence the equality
 	$$
 	\chi\left(\pi^*(\LL)^{\otimes m} \otimes \OO_{\mathcal{B}^\LL}(n)\right) \;=\; \chi\left(\II_Z^n \otimes \LL^{\otimes(m+n)}\right)
 	$$	
 	follows.
 	
 	(iii) By assumption, we have a closed immersion $X \hookrightarrow \PP_{\kk}^\ell$ such that $\LL$ is the pullback of $\OO_{\PP_{\kk}^\ell}(1)$.
 	Thus we get a closed immersion 
	\begin{equation}
		\label{eq_very_ample_on_B}
		\iota : \cB^\LL \;\hookrightarrow\;  \PP_{\kk}^\ell \times_\kk \PP_{\kk}^{r-1} \quad\text{ with }\quad \OO_{\cB^\LL}(m,n):=\iota^*\big(\OO_{\PP_{\kk}^\ell \times_\kk \PP_{\kk}^{r-1}}(m,n)\big) \;\cong\; \pi^*(\LL)^{\otimes m} \otimes \OO_{\mathcal{B}^\LL}(n).
	\end{equation}
 	Then, for all $m \gg 0$ and $n \gg 0$ and $i \ge 1$, we obtain 
	\begin{align*}
		\HH^i\big(X, \II_Z^n \otimes \LL^{\otimes(m+n)}\big) &\;\cong\; \HH^i\left(\cB, \pi^*(\LL)^{\otimes m} \otimes \OO_{\mathcal{B}^\LL}(n)\right) \\
		&\;\cong\; \HH^i\left(\cB, \OO_{\cB^\LL}(m,n)\right)\\
		&\;=\; 0
	\end{align*}
 	(see, e.g., \cite[Theorem 1.6]{HYRY_MULTGRAD}).
 	The result now follows from part (ii).
\end{proof}

\begin{remark}
	\label{rem_vanishing}
	Assume that $\LL$ is very ample on $X$. 
	The proof of \autoref{cor_numerical_snapper} yields the vanishing 
	$$
	\HH^i\big(X, \II_Z^n \otimes \LL^{\otimes(m+n)}\big) \;=\;0
	$$
	for all $i \ge 1$, $m \gg 0$ and $n \gg 0$.
\end{remark}

\section{Segre classes and an integral dependence criterion}
\label{sect_main}

In this section we prove \autoref{thmA}, the main result of the paper.
It establishes a criterion for integral dependence formulated through Segre classes.
Our proof builds on van Gastel’s result (\autoref{thm_van_Gastel}) and on the developments made in \autoref{sect_van_Gastel}.
The following setup is used throughout this section.

\begin{setup}
	\label{setup_segre_int_dep}
	Let $\kk$ be a field and $X$ be an equidimensional projective scheme over $\kk$.
	Let $d := \dim(X)$ be the dimension of $X$.
	Let $\LL$ be an ample line bundle on $X$.
	Let $j_W : W \hookrightarrow X$ and $j_Z : Z \hookrightarrow X$ be two closed subschemes of $X$ such that $W \subseteq Z$.
\end{setup}

Let $p > 0$ be a positive integer such that $\LL^{\otimes p}$ is very ample on $X$ and the coherent sheaves $\II_Z \otimes \LL^{\otimes p}$ and $\II_W \otimes \LL^{\otimes p}$ are  generated by global sections.
Let 
$$
\underline{\sigma} = \sigma_1, \ldots, \sigma_r \;\in\; \HH^0\left(X, \II_Z \otimes \LL^{\otimes p}\right) \;\subset\; \HH^0\left(X, \LL^{\otimes p}\right)
$$
and 
$$
\underline{\eta} = \eta_1, \ldots, \eta_s \;\in\; \HH^0\left(X, \II_W \otimes \LL^{\otimes p}\right) \;\subset\; \HH^0\left(X, \LL^{\otimes p}\right)
$$
be sequences of global sections of $\LL^{\otimes p}$ generating the coherent sheaves $\II_Z \otimes \LL^{\otimes p}$ and $\II_W \otimes \LL^{\otimes p}$, respectively.
Notice that $Z = V(\sigma_1) \cap \cdots \cap V(\sigma_r)$ and $W = V(\eta_1) \cap \cdots \cap V(\eta_s)$.
Following \autoref{def_inter_algo} and \autoref{def_vogel_cycle}, we consider the Vogel cycles
$$
\nu(\underline{\sigma}, X) = \sum_{i\ge 0}\nu^i(\underline{\sigma}, X) \in Z_*\left(Z_\llf\right)
\quad \text{ and } \quad 
\nu(\underline{\eta}, X) = \sum_{i\ge 0}\nu^i(\underline{\eta}, X) \in Z_*\left(W_\llf\right).
$$
Due to van Gastel's result (see \autoref{thm_van_Gastel}), we obtain the following equalities 
\begin{align}
	\label{eq_vogel_Z}
	\nu^i(\underline{\sigma}, X)  &\;=\; \sum_{j=0}^i\binom{i-1}{j-1} p^{i-j}c_1(\LL)^{i-j} \smallfrown s^j(Z, X) \\
	\label{eq_vogel_W}
	\nu^i(\underline{\eta}, X)  &\;=\; \sum_{j=0}^i\binom{i-1}{j-1} p^{i-j}c_1(\LL)^{i-j} \smallfrown s^j(W, X) \\ 
	\label{eq_seg_Z} s^i(Z, X) &\;=\; \sum_{j=0}^i \binom{i-1}{j-1} {(-1)}^{i-j} p^{i-j}c_1(\LL)^{i-j} \smallfrown \nu^j(\underline{\sigma}, X) \\
	\label{eq_seg_W} s^i(W, X) &\;=\; \sum_{j=0}^i \binom{i-1}{j-1} {(-1)}^{i-j} p^{i-j}c_1(\LL)^{i-j} \smallfrown \nu^j(\underline{\eta}, X).
\end{align}

The lemma below provides an important equivalence that follows from van Gastel’s result. 

\begin{lemma}
	\label{lem_num_equiv}
	The following conditions are equivalent: 
	\begin{enumerate}[\;\;\rm (a)]
		\item $\deg_\LL\left(s^i(Z, X)\right) = \deg_\LL\left(s^i(W, X)\right)$ for all $i \ge 0$.
		\item $\deg_\LL\left(\nu^i(\underline{\sigma}, X)\right) = \deg_\LL\left(\nu^i(\underline{\eta}, X)\right)$ for all $i \ge 0$.
		\item $\deg_\LL\left(\beta^i(\underline{\sigma}, X)\right) = \deg_\LL\left(\beta^i(\underline{\eta}, X)\right)$ for all $i \ge 0$.
	\end{enumerate}  
\end{lemma}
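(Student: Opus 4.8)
The plan is to establish the two equivalences (a) $\Leftrightarrow$ (b) and (b) $\Leftrightarrow$ (c) separately, in each case by applying the $\LL$-degree functional to a cycle-level identity that is already in hand: van Gastel's formulas \autoref{eq_vogel_Z}--\autoref{eq_seg_W} for the first equivalence, and the intersection-algorithm decomposition \autoref{eq_decomp_cycles} for the second. The single computational fact that drives everything is that $\deg_\LL$ absorbs caps by $c_1(\LL)$: if $\alpha \in A_{d-j}(X)$, then because $\deg_\LL$ of a class in $A_{d-i}(X)$ is $\int c_1(\LL)^{d-i} \smallfrown (-)$, combining powers gives
\[
\deg_\LL\big(c_1(\LL)^{i-j} \smallfrown \alpha\big) \;=\; \int c_1(\LL)^{d-j} \smallfrown \alpha \;=\; \deg_\LL(\alpha).
\]

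For (a) $\Leftrightarrow$ (b), I would apply $\deg_\LL$ to both \autoref{eq_vogel_Z} and \autoref{eq_vogel_W}. By the displayed identity above, each term $c_1(\LL)^{i-j} \smallfrown s^j(Z, X)$ contributes exactly $\deg_\LL\big(s^j(Z,X)\big)$, yielding
\[
\deg_\LL\big(\nu^i(\underline{\sigma}, X)\big) \;=\; \sum_{j=0}^i \binom{i-1}{j-1} p^{i-j}\, \deg_\LL\big(s^j(Z, X)\big),
\]
and the analogous formula for $W$ from \autoref{eq_vogel_W}. Since the $j=i$ term equals $1$ and the conventions $\binom{m}{-1}=0$, $\binom{-1}{-1}=1$ make the $i=0$ row the identity, this expresses the sequence $\big(\deg_\LL \nu^i(\underline{\sigma},X)\big)_i$ as a unitriangular linear image of $\big(\deg_\LL s^i(Z,X)\big)_i$; van Gastel's \autoref{eq_seg_Z} provides the explicit inverse. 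Hence the two Segre degree-sequences for $Z$ and $W$ agree if and only if the two Vogel degree-sequences agree, which is exactly (a) $\Leftrightarrow$ (b).

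For (b) $\Leftrightarrow$ (c), the mechanism is a recursion coming from the algorithm. Because $\sigma_j,\eta_j \in \HH^0(X, \LL^{\otimes p})$, each divisor $D_i$ lies in $|\LL^{\otimes p}|$, so intersecting with it is capping by $c_1(\LL^{\otimes p}) = p\,c_1(\LL)$; applying $\deg_\LL$ to \autoref{eq_decomp_cycles} gives, for every $i \ge 1$,
\[
\deg_\LL\big(\nu^i(\underline{\sigma}, X)\big) + \deg_\LL\big(\beta^i(\underline{\sigma}, X)\big) \;=\; p\,\deg_\LL\big(\beta^{i-1}(\underline{\sigma}, X)\big),
\]
and the same for $\underline{\eta}$. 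The initial step of the algorithm furnishes the base case $\nu^0 + \beta^0 = [X]$ (after the identification of \autoref{rem_base_change}), so $\deg_\LL(\nu^0)+\deg_\LL(\beta^0)=\deg_\LL([X])$ is independent of the subscheme. Assuming (c), the recursion gives equality of Vogel degrees for $i \ge 1$ and the base case settles $i=0$; assuming (b), I would rewrite the recursion as $\deg_\LL(\beta^i) = p\,\deg_\LL(\beta^{i-1}) - \deg_\LL(\nu^i)$ and induct on $i$, again anchoring at the base case. Either direction yields (b) $\Leftrightarrow$ (c).

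The step I expect to demand the most care is the treatment of the initial ($i=0$) terms. The classes $\nu^0(\underline{\sigma},X) = [X]^{Z}$ and $\beta^0(\underline{\sigma},X) = [X]^{X \setminus Z}$ isolate the components of $X$ according to whether they sit inside $Z$, so they genuinely differ from their $W$-analogues and need not vanish; one cannot simply compare them term by term. What makes the induction go through is that their sum is the fixed fundamental class $[X]$, so both recursions are anchored at the common value $\deg_\LL([X])$, and it is precisely this anchoring that lets equality of all Vogel degrees propagate to equality of all polar degrees and conversely.
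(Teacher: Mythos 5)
Your proof is correct and follows essentially the same route as the paper: applying $\deg_\LL$ to van Gastel's formulas \autoref{eq_vogel_Z}--\autoref{eq_seg_W} for (a) $\Leftrightarrow$ (b), and extracting the degree recursion from \autoref{eq_decomp_cycles} (anchored at $\deg_\LL([X])$) for (b) $\Leftrightarrow$ (c). The only cosmetic difference is that the paper phrases the recursion in terms of $\deg_{\LL^{\otimes p}}$ rather than carrying the factor $p$ with $\deg_\LL$, and it leaves the induction you spell out implicit.
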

\begin{proof}
	(a) $\Rightarrow$ (b):		
	By utilizing \autoref{eq_vogel_Z}, we obtain
	\begin{align*}
		\deg_\LL\left(\nu^i(\underline{\sigma}, X)\right) &\;=\; \int c_1(\LL)^{d-i} \smallfrown \nu^i(\underline{\sigma}, X) \\
		&\;=\;   \sum_{j=0}^i\binom{i-1}{j-1} p^{i-j} \int c_1(\LL)^{d-i} c_1(\LL)^{i-j} \smallfrown s^j(Z, X) \\ 
		&\;=\;   \sum_{j=0}^i\binom{i-1}{j-1} p^{i-j} \deg_\LL\left(s^j(Z, X)\right).
	\end{align*}
	Similarly, \autoref{eq_vogel_W} yields 
	$$
	\deg_\LL\left(\nu^i(\underline{\eta}, X)\right) \;=\;  \sum_{j=0}^i\binom{i-1}{j-1} p^{i-j} \deg_\LL\left(s^j(W, X)\right).
	$$ 
	So the implication (a) $\Rightarrow$ (b) follows.
	
	(b) $\Rightarrow$ (a): This implication can be obtained similarly from \autoref{eq_seg_Z} and \autoref{eq_seg_W}.
	
	(b) $\Leftrightarrow$ (c): 
	As in \autoref{def_inter_algo}, let $D_i$ be the set of zeros of a generic linear combination of the sections $\underline{\sigma} = \sigma_1,\ldots,\sigma_r$.
	We have $D_i \cdot \beta^{i-1}(\underline{\sigma},X) = c_1(\LL^{\otimes p}) \smallfrown \beta^{i-1}(\underline{\sigma},X)$ for all $i \ge 1$ (see \autoref{def_inter_algo} and \autoref{rem_base_change}).
	So, we get 
	$$
	\deg_{\LL^{\otimes p}}(X) = \deg_{\LL^{\otimes p}}\left(\nu^0(\underline{\sigma}, X)\right) + \deg_{\LL^{\otimes p}}\left(\beta^0(\underline{\sigma}, X)\right)
	$$ 
	and 
	$$
	\deg_{\LL^{\otimes p}}\left(\beta^{i-1}(\underline{\sigma}, X)\right) \;=\; \deg_{\LL^{\otimes p}}\left(\nu^{i}(\underline{\sigma}, X)\right) + \deg_{\LL^{\otimes p}}\left(\beta^{i}(\underline{\sigma}, X)\right) \quad \text{ for all \quad $i \ge 1$.}
	$$
	By the same token, we obtain 
	$$
	\deg_{\LL^{\otimes p}}(X) = \deg_{\LL^{\otimes p}}\left(\nu^0(\underline{\eta}, X)\right) + \deg_{\LL^{\otimes p}}\left(\beta^0(\underline{\eta}, X)\right)
	$$ 
	and
	$$
	\deg_{\LL^{\otimes p}}\left(\beta^{i-1}(\underline{\eta}, X)\right) \;=\; \deg_{\LL^{\otimes p}}\left(\nu^{i}(\underline{\eta}, X)\right) + \deg_{\LL^{\otimes p}}\left(\beta^{i}(\underline{\eta}, X)\right) \quad \text{ for all \quad $i \ge 1$.}
	$$
	Thus the claimed equivalence follows. 
\end{proof}

We are now ready to prove our main result.

\begin{theorem}
	\label{thm_main}
	Assume \autoref{setup_segre_int_dep}.
	Then the following six conditions are equivalent:
	\begin{enumerate}[\;\;\rm (a)]
		\item $\II_W$ is integral over $\II_Z$. 
		\item[\;\;\rm(b$'$)] $s(Z, X) = {j}_*\left(s(W, X)\right)$ in $A_*(Z)$, where $j : W \hookrightarrow Z$ is the natural closed immersion.
		\item ${j_Z}_*\left(s(Z, X)\right) = {j_W}_*\left(s(W, X)\right)$ in $A_*(X)$.
		\item $\deg_\LL\left(s^i(Z, X)\right) = \deg_\LL\left(s^i(W, X)\right)$ for all $i \ge 0$.
		\item $\deg_\LL\left(\nu^i(\underline{\sigma}, X)\right) = \deg_\LL\left(\nu^i(\underline{\eta}, X)\right)$ for all $i \ge 0$.
		\item $\deg_\LL\left(\beta^i(\underline{\sigma}, X)\right) = \deg_\LL\left(\beta^i(\underline{\eta}, X)\right)$ for all $i \ge 0$.
	\end{enumerate} 
\end{theorem}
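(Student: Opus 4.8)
The plan is to establish the cycle of implications $(a)\Rightarrow(b)\Rightarrow(c)$ and then to close the loop with $(e)\Rightarrow(a)$, invoking \autoref{lem_num_equiv} for the equivalences $(c)\Leftrightarrow(d)\Leftrightarrow(e)$. Thus only two implications carry genuine content: the ``geometric'' implication $(a)\Rightarrow(b)$ and the ``hard'' implication $(e)\Rightarrow(a)$; the intermediate steps are formal.

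For $(a)\Rightarrow(b)$, if $\II_W$ is integral over $\II_Z$ then $\overline{\II_Z}=\overline{\II_W}$, so the closed subschemes cut out by the two integral closures coincide, and by the birational invariance of Segre classes recalled in the introduction we get ${j_Z}_*(s(Z,X))={j_W}_*(s(W,X))$ in $A_*(X)$. For $(b)\Rightarrow(c)$, pushforward respects the grading by dimension, so $(b)$ yields ${j_Z}_*(s^i(Z,X))={j_W}_*(s^i(W,X))$ in $A_{d-i}(X)$ for every $i$; applying the projection formula gives $\deg_\LL\!\big(s^i(Z,X)\big)=\int c_1(\LL)^{d-i}\smallfrown {j_Z}_*\big(s^i(Z,X)\big)$ and similarly for $W$, so the $\LL$-degrees agree.

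The main work is $(e)\Rightarrow(a)$. First I would replace $\LL$ by a very ample power $\LL^{\otimes p}$; since $\deg_{\LL^{\otimes p}}\!\big(\beta^i\big)=p^{d-i}\deg_\LL\!\big(\beta^i\big)$, condition $(e)$ is insensitive to this replacement. By \autoref{cor_numerical_snapper}(i), the equalities in $(e)$ say precisely that the Snapper polynomials $P_{\LL,Z}$ and $P_{\LL,W}$ share the same degree-$d$ homogeneous part. Using parts (ii) and (iii) together with \autoref{rem_vanishing}, the difference $P_{\LL,W}(m,n)-P_{\LL,Z}(m,n)$ equals $h^0\!\big(X,(\II_W^n/\II_Z^n)\otimes\LL^{\otimes(m+n)}\big)$ for $m,n\gg0$, so coincidence of the top parts forces this quotient to grow with $(m,n)$-degree at most $d-1$. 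Transporting the picture to the blow-up $\cB=\Bl_Z(X)$, where $\II_Z\OO_\cB$ is invertible and the inclusion $\II_Z\subseteq\II_W$ produces the cokernel sheaf $\II_W\OO_\cB/\II_Z\OO_\cB$, the failure of integral dependence is measured by an effective cycle supported on the locus where $\II_W\OO_\cB\neq\II_Z\OO_\cB$. The positivity heart of the argument is to realize the drop between the two degree-$d$ Snapper parts as the $\LL$-degree of this effective cycle; condition $(e)$ then forces that $\LL$-degree to vanish. Since $\LL$ is ample, an effective cycle of $\LL$-degree zero must itself be zero, whence $\II_W\OO_\cB=\II_Z\OO_\cB$, and by the blow-up criterion for integral dependence $\II_W$ is integral over $\II_Z$.

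The hard part is exactly this last implication: converting a numerical coincidence into an identity of ideals up to integral closure, which is a global projective analogue of Rees's multiplicity theorem. The delicate point is that one must control \emph{all} the Segre classes simultaneously, not merely the leading one, and this is what forces the full polar-cycle and Snapper package (\autoref{prop_blow_up_formula}, \autoref{cor_numerical_snapper}) together with the positivity input. Ampleness is indispensable here through the principle that a nonzero effective cycle has strictly positive $\LL$-degree; for a merely big and nef $\LL$ a nonzero effective cycle can have zero degree, which is precisely why the ampleness hypothesis cannot be dropped.
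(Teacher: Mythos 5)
Your skeleton matches the paper's: the chain (a)$\Rightarrow$(b)$\Rightarrow$(c), the equivalences (c)$\Leftrightarrow$(d)$\Leftrightarrow$(e) via \autoref{lem_num_equiv}, and the reduction of (e)$\Rightarrow$(a) through the Snapper polynomial comparison of \autoref{cor_numerical_snapper} and \autoref{rem_vanishing} all agree with the paper, and those parts of your argument are sound. The problem is that the entire content of (e)$\Rightarrow$(a) lies in the positivity statement you label ``the positivity heart of the argument'' and then assert rather than prove, and the mechanism you sketch for it does not work. You claim that failure of integral dependence is measured by the cokernel sheaf $\II_W\OO_\cB/\II_Z\OO_\cB$ on $\cB=\Bl_Z(X)$, so that $\II_W\OO_\cB=\II_Z\OO_\cB$ would be forced and integral dependence would follow. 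But the equivalence you are implicitly using is false in one direction: integral dependence does \emph{not} imply equality of the pulled-back ideal sheaves on the (non-normalized) blow-up. For example, with $I=(x^2,y^2)\subseteq J=(x^2,xy,y^2)$ in $\kk[x,y]$, the ideal $J$ is integral over $I$, yet on the chart $\Spec\big(\kk[x,y,t]/(y^2-tx^2)\big)$ of $\Bl_I$ the element $xy$ is not a multiple of $x^2$ (the ratio $y/x$ is integral over, but not in, this non-normal ring), so $J\OO_\cB\neq I\OO_\cB$. Consequently the nonvanishing of your cokernel cycle does not detect failure of integral dependence, and the claimed identity ``drop of the degree-$d$ Snapper parts $=$ $\LL$-degree of this effective cycle'' cannot hold as stated: in this example the drop is zero while the cycle is not. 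There is also a dimension mismatch you never address: the cokernel is supported inside the exceptional locus, hence in dimension at most $d-1$, whereas the quantity you must control is the coefficient of $n^d$ in $h^0\big(X,\II_W^n/\II_Z^n\otimes\LL^{\otimes 2pn}\big)$, and no construction is given tying the latter to a degree of the former.

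What the paper actually does at this point is quite different and is where the real work lies (\autoref{prop_hard}). One works on $\sY=\Bl_W(X)$ (the blow-up along the \emph{larger} ideal), where failure of integral dependence is correctly characterized by non-finiteness of $\sD=\Rees^{\LL^{\otimes p}}(\II_W)$ over $\sC=\Rees^{\LL^{\otimes p}}(\II_Z)$, i.e.\ by nonemptiness of $\fProj_X\left(\sD/\sC_+\sD\right)$. Choosing a closed point $y$ in this locus, the paper bounds $h^0\big(X,\II_W^n/\II_Z^n\otimes\LL^{\otimes 2pn}\big)$ from below by the Hilbert--Samuel function of the $d$-dimensional local ring $\OO_{\sY,y}$ ($\sY$ being equidimensional of dimension $d$ by \autoref{rem_blowup_irr_comp}), using Kleiman--Thorup's theory of Buchsbaum--Rim polynomials to identify the relevant length function, together with two cohomology-vanishing claims (proved via very ample multi-projective embeddings) to convert local lengths into global section counts. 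The strict positivity then comes from positivity of the Hilbert--Samuel multiplicity of a $d$-dimensional local ring, not from the principle ``a nonzero effective cycle has positive $\LL$-degree.'' So your proposal is not a complete proof: the step carrying all the difficulty is missing, and the route you indicate for it would have to be replaced by an argument of this local multiplicity-theoretic kind (or carried out on the normalized blow-up with substantial additional work).
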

\begin{proof}
	(a) $\Rightarrow$ (b$'$): This implication is a consequence of the birational invariance of Segre classes. 	
	For completeness, we include a short argument.
	Assume that $\overline{\II_Z} = \overline{\II_W}$.
	Thus $Z$ and $W$ have the same support, and so we may identify their Chow groups.
	Let $X_1,\ldots,X_k$ be the irreducible components of $X$ and $m_i$ be the geometric multiplicity of $X_i$ in $X$.
	Let $Z_i = Z \cap X_i$ and $W_i = W \cap X_i$.
	By \cite[Lemma 4.2]{FULTON_INTER}, we have $s(Z, X) = \sum_{i=1}^k m_i s(Z_i, X_i) \in A_*(Z)$ and $s(W, X) = \sum_{i=1}^k m_i s(W_i, X_i) \in A_*(W)$.
	Thus to show the equality $s(Z, X) = s(W, X) \in A_*(Z) = A_*(W)$, we may assume that $X$ is irreducible.
	
	If we had the equality $W_{\rm red} = X_{\rm red}$ (equivalently, the equality $Z_{\rm red} = X_{\rm red}$), we would obtain 
	$$
	s(Z, X) \;=\; [X] \;=\; s(W, X);
	$$
	this follows for instance by using \autoref{thm_van_Gastel} and \autoref{def_inter_algo}.
	Therefore, we may assume that $W$ and $Z$ are nowhere dense on $X$.
	
	Let $b_Z : \Bl_Z(X) \rightarrow X$ and $b_W : \Bl_W(X) \rightarrow X$ be the blow-ups of $X$ along $Z$ and $W$, respectively. 
	Let $E_ZX$ and $E_WX$ be the corresponding exceptional divisors.
	Consider the natural morphisms $\psi : E_WX \rightarrow E_ZX$, $\eta : E_ZX \rightarrow Z$ and $\overline{\eta} : E_WX  \rightarrow W$.
	Under the  finite birational morphism $\Bl_W(X) \rightarrow \Bl_Z(X)$ the pullback of $E_ZX$ is equal to $E_WX$; indeed, recall that $\II_Z{\II_W}^k = {\II_W}^{k+1}$ for $k \gg 0$.  
	Then, by applying \cite[Proposition 4.2(a)]{FULTON_INTER} successively, we obtain
	\begin{align*}
		s\left(Z, X\right) &\;=\; \eta_*\big(s\left(E_ZX,\, \Bl_Z(X)\right)\big)\\
		&\;=\; \eta_*\left(\psi_*\big(s\left(E_WX,\, \Bl_W(X)\right)\big)\right) \\
		&\;=\; \overline{\eta}_*\big(s\left(E_WX,\, \Bl_W(X)\right)\big)\\
		&\;=\; s\left(W, X\right) \;\;\in\;\; A_*(Z) = A_*(W).
	\end{align*}
	This concludes the proof of this part.

	\smallskip
	
	(b$'$) $\Rightarrow$ (b) $\Rightarrow$ (c): These implications are clear by definition. 

	\smallskip
	
	(c) $\Leftrightarrow$ (d) $\Leftrightarrow$ (e): These equivalences were proved in \autoref{lem_num_equiv}.
	
	\smallskip
	
	(e) $\Rightarrow$ (a): 
	Assume that $\deg_\LL\left(\beta^i(\underline{\sigma}, X)\right) = \deg_\LL\left(\beta^i(\underline{\eta}, X)\right)$ for all $i \ge 0$.
	Consider the short exact sequence 
	$$
	0 \;\rightarrow\; \II_Z^{n} \otimes \LL^{\otimes 2pn} \;\rightarrow\; \II_W^{n} \otimes \LL^{\otimes 2pn} \;\rightarrow\; \II_W^{n}/\II_Z^{n} \otimes \LL^{\otimes 2pn} 
	\;\rightarrow\; 0.
	$$
	By \autoref{rem_vanishing}, we have $\HH^i\big(X, \II_Z^{n} \otimes \LL^{\otimes 2pn}\big) = 0$ for all $i \ge 1$ and $n \gg 0$.
	Thus the corresponding long exact sequence in cohomology and \autoref{cor_numerical_snapper} yield 
	\begin{align*}
		\lim_{n \to \infty} \frac{h^0\left(X, \II_W^{n}/\II_Z^{n} \otimes \LL^{\otimes 2pn} \right)}{n^d} &\;=\; \lim_{n \to \infty} \frac{h^0\left(X, \II_W^{n} \otimes \LL^{\otimes 2pn} \right)-h^0\left(X, \II_Z^{n} \otimes \LL^{\otimes 2pn} \right)}{n^d} \\
		&\;=\; \lim_{n \to \infty} \frac{P_{\LL^{\otimes p}, W}\left(n, n\right) - P_{\LL^{\otimes p}, Z}\left(n, n\right)}{n^d} \\
		&\;=\; \lim_{n \to \infty} \frac{\sum_{i=0}^d \left(\deg_\LL\left(\beta^i(\underline{\eta},X)\right)-\deg_\LL(\beta^i\left(\underline{\sigma},X)\right)\right)\frac{p^{d-i}n^d}{i!(d-i)!}}{n^d} \\
		&\;=\; 0.
	\end{align*}
	Finally, by \autoref{prop_hard} below, $\Rees(\II_W)$ must be of finite type as a module over $\Rees(\II_Z)$.
	This concludes the proof of the theorem.
\end{proof}

The following proposition was our main tool in the proof of \autoref{thm_main}.
Although the proof is technical, the idea is that a failure of integral dependence leaves a detectable trace on the blow-up of $X$ along $W$. 
If $\II_W$ were not integral over $\II_Z$, then $\fProj_X\left(\Rees(\II_W)/\Rees(\II_Z)_+\Rees(\II_W)\right)$ would be nonempty, and we could choose a closed point $y$ on this closed subscheme of $\Bl_W(X)$ (see \hyperref[chunk_int_dep]{\S 2.8}).
The Hilbert-Samuel polynomial of the local ring $\OO_{\Bl_W(X),y}$ then gives, after suitable ample twists, a lower bound for the asymptotic growth of $h^0\left(X, \II_W^{n}/\II_Z^{n} \otimes \LL^{\otimes 2pn} \right)$.
This local-to-global argument relies in a fundamental way on the work of Kleiman and Thorup on Buchsbaum-Rim polynomials (see \cite[\S 8]{KLEIMAN_THORUP_MIXED}).
Finally, since $\Bl_W(X)$ would be equidimensional of dimension $d$, the local ring of $y$ would have dimension $d$, and so the corresponding Hilbert-Samuel polynomial would have degree $d$.

\begin{proposition}
	\label{prop_hard}
	Assume that $\Rees(\II_W)$ is not of finite type as a module over $\Rees(\II_Z)$.
	 Then we have the following strict inequality 
	 $$
	 \lim_{n \to \infty} \frac{h^0\left(X, \II_W^{n}/\II_Z^{n} \otimes \LL^{\otimes 2pn} \right)}{n^d} \;>\; 0.
	 $$
\end{proposition}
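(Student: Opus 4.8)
The plan is to transport the whole computation to the blow-up $\cB_W := \Bl_W(X)$ and to recognize the limit there as a genuine multiplicity that the failure of integral dependence forces to be positive. Let $b_W : \cB_W \to X$ be the projection and write $\OO_{\cB_W}(1) = \II_W\OO_{\cB_W}$, an invertible ideal sheaf. Since $\II_W\otimes\LL^{\otimes p}$ is globally generated and $\LL^{\otimes p}$ is very ample, the line bundle $\M := \OO_{\cB_W}(1)\otimes b_W^*(\LL^{\otimes 2p})$ is (very) ample on $\cB_W$, exactly as in \autoref{rem_nice_immersion} and \autoref{eq_very_ample_on_B}. Because $\II_Z\subseteq\II_W$, the sheaf $\II_Z\OO_{\cB_W}$ lies inside the invertible $\OO_{\cB_W}(1)$, so $\mathcal J := \II_Z\OO_{\cB_W}\otimes\OO_{\cB_W}(-1)$ is an honest ideal sheaf on $\cB_W$ with $\II_Z^n\OO_{\cB_W} = \mathcal J^n\otimes\OO_{\cB_W}(n)$, whence $\mathcal J^n\otimes\M^{\otimes n}=\II_Z^n\OO_{\cB_W}\otimes b_W^*(\LL^{\otimes 2pn})$. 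First I would set up the dictionary. From $(b_W)_*\OO_{\cB_W}(n) = \II_W^n$ with vanishing higher direct images for $n\gg0$ (as in \autoref{cor_numerical_snapper}) one gets $h^0(X,\II_W^n\otimes\LL^{\otimes2pn}) = h^0(\cB_W,\M^{\otimes n})$. On the other side $(b_W)_*(\II_Z^n\OO_{\cB_W})$ is squeezed between $\II_Z^n$ and $\overline{\II_Z^n}$, and since the integral closure of $\Rees(\II_Z)$ is module-finite there is a fixed $c$ with $\II_Z^n\subseteq\overline{\II_Z^n}\subseteq\II_Z^{n-c}$; hence all three have the same leading $h^0(\,\cdot\,\otimes\LL^{\otimes2pn})/n^d$-coefficient. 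Combining this with \autoref{rem_vanishing} and a uniform vanishing $\HH^1(\cB_W,\mathcal J^n\otimes\M^{\otimes n}) = 0$ for $n\gg0$ (Hyry's multigraded theorem \cite{HYRY_MULTGRAD}) I would rewrite
\[
\lim_{n\to\infty}\frac{h^0\!\left(X,\II_W^n/\II_Z^n\otimes\LL^{\otimes2pn}\right)}{n^d} \;=\; \lim_{n\to\infty}\frac{h^0\!\left(\cB_W,(\OO_{\cB_W}/\mathcal J^n)\otimes\M^{\otimes n}\right)}{n^d}.
\]

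Next I would translate the hypothesis: $\II_W$ is integral over $\II_Z$ if and only if $\mathcal J = \OO_{\cB_W}$. Indeed, pulling back integral equations gives $\overline{\II_Z}\,\OO_{\cB_W}\subseteq\overline{\II_Z\OO_{\cB_W}} = \overline{\mathcal J}\otimes\OO_{\cB_W}(1)$, so $\II_W\subseteq\overline{\II_Z}$ together with $\II_W\OO_{\cB_W}=\OO_{\cB_W}(1)$ forces $\OO_{\cB_W}\subseteq\overline{\mathcal J}$, and $1\in\overline{\mathcal J}$ already forces $\mathcal J = \OO_{\cB_W}$; the converse follows by contracting to $X$. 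Thus the assumption that $\II_W$ is not integral over $\II_Z$ yields $\mathcal J\ne\OO_{\cB_W}$, i.e. $V(\mathcal J)\neq\varnothing$. (Under this hypothesis $\cB_W\neq\varnothing$, hence is equidimensional of dimension $d$ by \autoref{rem_blowup_irr_comp}; the degenerate case $\cB_W=\varnothing$ would force $\II_W=\II_Z=0$, where integral dependence trivially holds.)

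The crux is then the following positivity statement on $\cB_W$: if $\M$ is ample on the equidimensional projective scheme $\cB_W$ of dimension $d$ and $\varnothing\neq V(\mathcal J)$ for a nonzero ideal sheaf $\mathcal J$, then $\lim_n h^0(\cB_W,(\OO_{\cB_W}/\mathcal J^n)\otimes\M^{\otimes n})/n^d>0$. I would prove this by a local Hilbert--Samuel lower bound. Choose a closed point $\xi\in V(\mathcal J)$; its local ring has dimension $d$ and Hilbert--Samuel multiplicity $e:=e(\mathfrak m_\xi)\ge 1$. From $\mathcal J\subseteq\mathfrak m_\xi$ we get $\mathcal J^n\subseteq\mathfrak m_\xi^n$, hence $h^0(\cB_W,\mathcal J^n\otimes\M^{\otimes n})\le h^0(\cB_W,\mathfrak m_\xi^n\otimes\M^{\otimes n})$, so it suffices to bound below
\[
h^0(\cB_W,\M^{\otimes n}) - h^0(\cB_W,\mathfrak m_\xi^n\otimes\M^{\otimes n}) \;=\; \operatorname{rank}\Big(\HH^0(\cB_W,\M^{\otimes n})\longrightarrow \OO_{\cB_W,\xi}/\mathfrak m_\xi^n\Big),
\]
the rank of the $n$-jet evaluation map at $\xi$. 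Here ampleness enters decisively: after replacing $\M$ by a fixed power (which only rescales the limit by a positive constant) the Seshadri constant of $\M$ at $\xi$ exceeds $1$, so $\M^{\otimes n}$ eventually separates $n$-jets at $\xi$ and the jet map is surjective up to lower-order error; the rank, hence the drop, is then $\sim \tfrac{e}{d!}\,n^d$, giving the strict positivity.

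I expect the genuinely hard part to be exactly this last positivity step, for two reasons. The uniform higher-cohomology vanishing is along the diagonal $n\mapsto(\mathcal J^n,\M^{\otimes n})$, where the sheaves themselves vary with $n$, so ordinary Serre vanishing does not apply and one must invoke a multigraded vanishing theorem; and the asymptotic jet-separation estimate is precisely where the ampleness hypothesis is indispensable, matching the paper's observation that a merely big and nef $\LL$ does not suffice. As a backup to the jet argument I would instead identify the leading coefficient with the intersection number $\tfrac1{d!}\big(\int_{\cB_W}c_1(\M)^d - \int_{V'}c_1(g^*\M - F)^d\big)$ on the blow-up $g:V'=\Bl_{\mathcal J}(\cB_W)\to\cB_W$ with exceptional Cartier divisor $F$, and deduce strict positivity from the relative ampleness of $-F$: for small $t>0$ the class $g^*\M - tF$ is ample, so $\int_{V'}(g^*\M-tF)^{d-1}\cdot F>0$ with $F$ a nonzero effective divisor. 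Controlling the sign of the full alternating sum of the remaining mixed terms is the delicate point on that route, which is why I would favor the local multiplicity argument above.
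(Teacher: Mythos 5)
Your skeleton is in fact parallel to the paper's: recasting non-finiteness of $\Rees(\II_W)$ over $\Rees(\II_Z)$ as non-emptiness of the zero locus of $\mathcal{J}=\II_Z\OO_{\cB_W}\otimes\OO_{\cB_W}(-1)$ on $\Bl_W(X)$ is exactly the paper's observation that $\fProj_X\left(\sD/\sC_+\sD\right)\neq\varnothing$, and your closed point $\xi$ plays the role of the paper's point $y$. The gaps are in the two steps that carry the weight. First, the dictionary: your passage from $h^0\big(X,\II_Z^n\otimes\LL^{\otimes 2pn}\big)$ to $h^0\big(\cB_W,\mathcal{J}^n\otimes\M^{\otimes n}\big)$ rests on the sandwich $\II_Z^n\subseteq (b_W)_*\big(\II_Z^n\OO_{\cB_W}\big)\subseteq\overline{\II_Z^n}$ together with $\overline{\II_Z^n}\subseteq\II_Z^{n-c}$, the latter from module-finiteness of $\bigoplus_n\overline{\II_Z^n}$ over $\Rees(\II_Z)$. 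That finiteness theorem requires $X$ reduced; in \autoref{setup_segre_int_dep} $X$ is only an equidimensional projective scheme, and the paper's own proof explicitly contends with nilpotent ideal sheaves. On a non-reduced $X$ the containment $\overline{\II_Z^n}\subseteq\II_Z^{n-c}$ is simply false (the nilradical lies in $\overline{\II_Z^n}$ for every $n$), and one cannot quietly pass to $X_{\rm red}$ because the quantity $h^0\big(X,\II_W^n/\II_Z^n\otimes\LL^{\otimes 2pn}\big)$ to be bounded below lives on $X$. Moreover, the left inclusion of your sandwich fails to be injective when an irreducible component of $X$ lies inside $W$, and the right inclusion needs a valuative argument that is standard only for reduced schemes. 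The paper sidesteps all of this by never pushing ideal sheaves down from the blow-up: it compares the graded pieces of $\sC$, $\aaa^n$, $\sD$ directly on $X$.

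Second, positivity. You reduce to showing that the rank of the jet map $\HH^0\big(\cB_W,\M^{\otimes n}\big)\to\HH^0\big(\cB_W,\OO_{\cB_W}/\mm_\xi^n\otimes\M^{\otimes n}\big)$ grows like $\tfrac{e}{d!}\,n^d$, and you invoke Seshadri constants and asymptotic jet separation. The implication you need (Seshadri constant $>1$ at $\xi$ forces the $n$-jet map to be surjective up to lower-order error) is a theorem at smooth points, essentially in characteristic zero; your $\xi$ is a closed point of the possibly singular, possibly non-reduced scheme $\cB_W$, sitting precisely in the degenerate locus $V(\mathcal{J})$, over an arbitrary field. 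What is actually required is the uniform diagonal vanishing $\HH^1\big(\cB_W,\mm_\xi^n\otimes\M^{\otimes n}\big)=0$ for $n\gg0$ — which, as you yourself note, is not Serre vanishing since the sheaf varies with $n$ — and you flag this as the crux but never prove it. This is exactly what the paper's \autoref{claim2} supplies (blow up the point, embed into $\PP_\kk^\ell\times_\kk\PP_\kk^{r-1}\times_\kk\PP_\kk^{h-1}$, and quote Hyry's multigraded vanishing theorem), while the identification of the resulting count with a Hilbert--Samuel polynomial of degree exactly $d$ is the content of \autoref{claim1} via Kleiman--Thorup, not of a jet-separation theorem. Your backup route through differentiability of the volume function has the same defect: those results are established for smooth or normal varieties in characteristic zero, again outside the generality needed here. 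So the plan is sound and close to the paper's, but as written the two decisive steps are unproved, and parts of them are false in the stated generality.
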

\begin{proof}
	To simplify notation, let 
	$$
	\sC \;:=\; \Rees^{\LL^{\otimes p}}(\II_Z) \;=\; \bigoplus_{n\ge 0} \II_Z^n \otimes \LL^{\otimes pn} \quad \text{ and } \quad  \sD \;:=\; \Rees^{\LL^{\otimes p}}(\II_W) \;=\; \bigoplus_{n\ge 0} \II_W^n \otimes \LL^{\otimes pn}.
	$$
	Let $\pi : \sY = \fProj_X\left(\sD\right) \rightarrow X$ be the natural projection.
	We cannot have that $W_{\rm red} = X_{\rm red}$, because otherwise the ideal sheaves $\II_Z \subseteq \II_W$ would be nilpotent, and so they would have the same integral closure (equal to the nilradical ideal sheaf of $X$). 
	It then follows that $\sY$ is equidimensional of dimension $d$ (see \autoref{rem_blowup_irr_comp}). 
	By assumption, we have that $\fProj_X\left(\sD/\sC_+\sD\right) \neq \varnothing$, where  $\sC_+\sD \subset \sD$ denotes the graded ideal in $\sD$ generated by $\sC_+ = \bigoplus_{n\ge 1} \sC_n$ (see \hyperref[chunk_int_dep]{\S 2.8}).
	Let $y \in \sY$ be a closed point lying in  $\fProj_X\left(\sD/\sC_+\sD\right)$.
	Let $\II_y \subset \OO_\sY$ be the ideal sheaf of $y$. 
	Let $\aaa \subset \sD$ be the largest graded ideal in $\sD$ whose sheafification is $\II_y = \widetilde{\aaa}$.
	Since $y$ lies in $\fProj_X(\sD/\sC_+\sD)$, the maximality of $\aaa$ gives $\aaa \supset \sC_+\sD$.
	As $\pi$ is a closed morphism, the closed point $y \in \sY$ is mapped to a closed point $x  = \pi(y)$ in $X$.
	
	The function $n \mapsto \dim_{\kk}\left(\OO_{\sY, y} / \mm_y^{n} \right) = \left[\kappa(y):\kk\right] \cdot \length_{\OO_{\sY, y}}\left(\OO_{\sY, y} / \mm_y^{n}\right)$ eventually (for $n \gg 0$) becomes a polynomial $P_y(n) \in \QQ[n]$ of degree equal to $\dim(\OO_{\sY, y})$ (see, e.g., \cite[Chapter 5]{MATSUMURA}).
	We have that $\dim(\OO_{\sY, y}) =  d$ because $\sY$ is equidimensional of dimension $d$ and $y \in \sY$ is a closed point. 
	
	For organizational purposes, we encapsulate the most technical arguments in two claims that we prove below.
	
	\begin{claim}
		\label{claim1}
		Let $\delta \ge 0$ be large enough.
		Consider the function 
		$$
		\alpha(v, n, m) \;:=\; h^0\big(X, \sD_{v+n\delta} / \left[\aaa^{n}\right]_{v+n\delta} \otimes \LL^{\otimes m}\big)
		$$ 
		Then, for $v \gg 0$, $n \gg 0$ and $m \in \ZZ$, we have the equality
		$$
		\alpha(v, n, m) \;=\; P_y(n).
		$$
	\end{claim}	
	\begin{proof}[Proof of the claim]
		Let $\sD_x = \bigoplus_{n \ge 0} \left(\sD_n\right)_x$ and $\aaa_x = \bigoplus_{n \ge 0} \left(\aaa_n\right)_x$ be the stalks at $x$ of the quasi-coherent $\OO_X$-modules $\sD$ and $\aaa$, respectively.
		We can see $\sD_x$ as a standard graded algebra over the Noetherian local ring $\OO_{X, x}$ and $\aaa_x \subset \sD_x$ as a graded ideal in $\sD_x$.
		Since $\delta \ge 0$ is assumed to be large enough, there is a finite set of homogeneous elements of degree at most $\delta$ that generates the ideal  $\aaa_x \subset \sD_x$.
		Let $H := \left[\aaa_x\right]_\delta \subset \left[\sD_x\right]_\delta$ be the graded part of $\aaa_x$ of degree $\delta$.
		Then, by  \cite[Proposition 8.2]{KLEIMAN_THORUP_MIXED}, the function 
		$$
		\lambda(v,n) \;:=\; \dim_\kk\left(\left[\sD_x\right]_{v+n\delta} /  H^n\left[\sD_x\right]_v\right) \;=\; \left[\kappa(x) : \kk\right] \cdot  \length_{\OO_{X, x}}\left(\left[\sD_x\right]_{v+n\delta} /  H^n\left[\sD_x\right]_v\right)
		$$
		eventually (for $v \gg 0$ and $n \gg 0$) becomes a polynomial $P(v, n) \in \QQ[v,n]$ of total degree at most $d=\dim(\sY)$.
		
		Next, we show that $\alpha(v, n, m)$ equals the function $\lambda(v,n)$.
		Let $n \ge 1$.		 
		Since $\sD/\aaa^{n}$ (as a quasi-coherent $\OO_X$-module) is only supported at the point $x$, we can see $\sD/\aaa^{n}$  as a standard graded algebra over an Artinian local ring which is a quotient of $\OO_{X, x}$.
		Consequently,  the isomorphisms
		\begin{align}
			\label{eq_alpha_lambda}
			\begin{split}
				\HH^0\big(X, \sD_{v+n\delta} / \left[\aaa^{n}\right]_{v+n\delta} \otimes \LL^{\otimes m}\big)  &\;\cong\; \left(\sD_{v+n\delta} / \left[\aaa^{n}\right]_{v+n\delta} \otimes \LL^{\otimes m}\right)_x\\
				&\;\cong\; \left[\sD_x\right]_{v+n\delta} /  H^{n}\left[\sD_x\right]_v
			\end{split}			
		\end{align}
		follow.
		Thus $\alpha(v,n,m) = \lambda(v, n)$ for all $v \ge 0$, $n \ge 1$ and $m \in \ZZ$.
		
		The final step is to show that the two polynomials $P(v, n)$ and $P_y(n)$ actually coincide.	
		Since $\OO_\sY/ \II_y^{n}$ is only supported at the point $y$, we get 
		\begin{equation}
			\label{eq_local_ring_y}
			\HH^0\left(\sY, \OO_{\sY}/\II_y^{n} \otimes \OO_\sY(v+n\delta) \otimes \pi^*(\LL)^{\otimes m}\right) \;\cong\; \OO_{\sY, y} / \mm_y^{n} 
		\end{equation}
		where $\OO_{\sY, y}$ is the local ring of $y$ and $\mm_y \subset \OO_{\sY, y}$ is the corresponding maximal ideal.
		We fix $n_0$ large enough so that $\lambda(v, n_0)$
		eventually becomes the polynomial $P(v, n_0) \in \QQ[v]$ in $v$.
		For $v \gg 0$, since 
		$$
		\pi_*\left(\OO_{\sY}/\II_y^{n_0} \otimes \OO_\sY(v+n_0\delta) \otimes \pi^*(\LL)^{\otimes m}\right) \;\cong\; \sD_{v+n_0\delta} / \left[\aaa^{n_0}\right]_{v+n_0\delta} \otimes \LL^{\otimes m},
		$$
		we also obtain
		\begin{equation}
			\label{eq_pow_P_y}
			\HH^0\left(\sY, \OO_{\sY}/\II_y^{n_0} \otimes \OO_\sY(v+n_0\delta) \otimes \pi^*(\LL)^{\otimes m}\right) \;\cong\; \HH^0\big(X, \sD_{v+n_0\delta} / \left[\aaa^{n_0}\right]_{v+n_0\delta} \otimes \LL^{\otimes m}\big). 
		\end{equation}
		By combining \autoref{eq_alpha_lambda}, \autoref{eq_local_ring_y} and \autoref{eq_pow_P_y}, we get
		$$
		\alpha(v,n_0,m) \;=\; \lambda(v, n_0) \;=\; \dim_{\kk}\left(\OO_{\sY, y} / \mm_y^{n_0} \right)
		$$
		for $v \gg 0$ and $m \in \ZZ$.
		Hence $P(v, n_0) \in \QQ[v]$ is the constant polynomial equal to $\dim_{\kk}\left(\OO_{\sY, y} / \mm_y^{n_0} \right)$.
		Since the last equality holds for any $n_0$ large enough, the following equality of polynomials
		$$
		P(v, n) \;=\; P_y(n)
		$$
		follows. 
		This concludes the proof of the claim.
	\end{proof}

	\begin{claim}
		\label{claim2}
		Let $\delta \ge 0$ be large enough.
		Then 
		$$
		\HH^i\left(X, \left[\aaa^n\right]_{v+n\delta} \otimes \LL^{\otimes p(m+n\delta)}\right) \;=\; 0
		$$
		for all $i \ge 1$, $m \gg 0$, $v \gg 0$ and $n \gg 0$.
	\end{claim}
	\begin{proof}[Proof of the claim]
		We have a closed immersion $X \hookrightarrow \PP_{\kk}^\ell$ such that $\LL^{\otimes p}$ is the pullback of $\OO_{\PP_{\kk}^\ell}(1)$.
		As in \autoref{eq_very_ample_on_B}, the line bundle $\sM := \pi^*(\LL^{\otimes p}) \otimes \OO_{\sY}(1)$ is very ample on $\sY = \Bl_W^{\LL^{\otimes p}}(X)$.
		Thus we can choose $\delta \ge 0$ large enough so that $\II_y \otimes \sM^{\otimes \delta}$ is generated by global sections. 
		We consider the twisted blow-up 
		$$
		\sW  := \Bl_y^{\sM^{\otimes \delta}}(\sY) \;=\; \fProj_\sY\left(\Rees^{\sM^{\otimes \delta}}(\II_y)\right) \;=\; \fProj_\sY\left(\bigoplus_{n \ge 0}\II_y^n \otimes \sM^{\otimes n\delta}\right).
		$$
		Let $\omega : \sW \rightarrow X$ and $\gamma : \sW \rightarrow \sY$ be the natural projections.
		Similarly to \autoref{rem_nice_immersion}, by choosing global sections $\beta_1, \ldots,\beta_h$ generating $\II_y \otimes \sM^{\otimes \delta}$, we get a closed immersion $\sW \hookrightarrow \sY \times_\kk \PP_{\kk}^{h-1}$ such that $\OO_\sW(1)$ is the pullback of $\OO_{\PP_{\kk}^{h-1}}(1)$ via the natural projection $\sW \rightarrow \PP_{\kk}^{h-1}$.
		By combining with \autoref{rem_nice_immersion} and \autoref{eq_very_ample_on_B}, we obtain a closed immersion 
		$$
		\iota : \sW \;\hookrightarrow\; \PP_{\kk}^\ell \times_\kk \PP_{\kk}^{s-1} \times_\kk \PP_{\kk}^{h-1}
		$$
		such that $\omega^*(\LL)^{\otimes mp} \otimes \gamma^*\left(\OO_\sY(v)\right) \otimes \OO_\sW(n)$ is isomorphic to the pullback 
		$$
		\sW(m,v,n)\;:=\;\iota^*\big(\OO_{\PP_{\kk}^\ell \times_\kk \PP_{\kk}^{s-1} \times_\kk \PP_{\kk}^{h-1}}(m,v, n)\big).
		$$ 
		The same arguments as in the proof of \autoref{cor_numerical_snapper} yield the vanishing
		\begin{align*}
			\HH^i\left(X, \left[\aaa^n\right]_{v+n\delta} \otimes \LL^{\otimes p(m+n\delta)}\right) &\;\cong\; \HH^i\left(\sW, \omega^*(\LL)^{\otimes mp} \otimes \gamma^*\left(\OO_\sY(v)\right) \otimes \OO_\sW(n)\right) \\
			&\;\cong\; \HH^i\left(\sW, \OO_{\sW}(m,v,n)\right)\\ &\;=\; 0
		\end{align*}
		for all $i \ge 1$, $m \gg 0$, $v \gg 0$ and $n \gg 0$ (see, e.g., \cite[Theorem 1.6]{HYRY_MULTGRAD}).
	\end{proof}
	
	Fix $\delta \ge 0$, $m = v \ge 0$ and $n \ge 0$ large enough so that both \autoref{claim1} and \autoref{claim2} hold. 
	By \autoref{rem_vanishing}, we may also assume that $\HH^i\left(X, \sC_{v + n\delta} \otimes \LL^{\otimes p(v+n\delta)}\right)=0$ for all $i \ge 1$.
	Since we have the inclusions $\sC_{v + n\delta} \subseteq \left[\left(\sC_+\sD\right)^{n}\right]_{v+n\delta} \subseteq \left[\aaa^n\right]_{v+n\delta}$, we obtain the short exact sequence 
	$$
	0 \;\rightarrow\; \sC_{v + n\delta} \;\rightarrow\; \left[\aaa^n\right]_{v+n\delta} \;\rightarrow\; \left[\aaa^n\right]_{v+n\delta}/\sC_{v + n\delta}  \;\rightarrow\; 0.
	$$
	By twisting it with $\LL^{\otimes p(v+n\delta)}$ and considering the corresponding long exact sequence in cohomology, the vanishings of \autoref{rem_vanishing} and \autoref{claim2} yield
	\begin{equation}
		\label{eq_vanish_quot}
		\HH^i\left(X, \left[\aaa^n\right]_{v+n\delta}/\sC_{v + n\delta} \otimes \LL^{\otimes p(v+n\delta)}\right) \;=\; 0
	\end{equation}
	for all $i \ge 1$.
	
	We also have the following short exact sequence
	$$
	0 \;\rightarrow\; \left[\aaa^n\right]_{v+n\delta}/\sC_{v + n\delta}  \;\rightarrow\; \sD_{v + n\delta} / \sC_{v + n\delta} \;\rightarrow\;  \sD_{v + n\delta} / \left[\aaa^n\right]_{v+n\delta}   \;\rightarrow\; 0.
	$$
	By twisting it with $\LL^{\otimes p(v+n\delta)}$, considering the corresponding long exact sequence in cohomology, and utilizing \autoref{eq_vanish_quot} and \autoref{claim1}, we obtain the inequalities 
	\begin{align*}
		\lim_{n \to \infty} \frac{h^0\left(X, \II_W^{v+n\delta} / \II_Z^{v+n\delta} \otimes \LL^{\otimes 2p(v+n\delta)} \right)}{\left(v+n\delta\right)^d} 
		&\;=\; \lim_{n \to \infty} \frac{h^0\left(X, \sD_{v + n\delta}/\sC_{v + n\delta} \otimes \LL^{\otimes p(v+n\delta)} \right)}{\left(v+n\delta\right)^d} 
		\\
		&\;\ge\; 
		\lim_{n \to \infty} \frac{h^0\left(X, \sD_{v + n\delta} / \left[\aaa^n\right]_{v+n\delta} \otimes \LL^{\otimes p(v+n\delta)} \right)}{\left(v+n\delta\right)^d} \\
		&\;=\; 	\lim_{n \to \infty} \frac{P_y(n)}{\left(v+n\delta\right)^d} \\
		&\;>\; 0.
	\end{align*}
	Finally, the proof of the proposition is now complete.
\end{proof}

The following example shows that \autoref{thm_main} is sharp: the assumption that $\LL$ be ample appears to be essential, as big and nef line bundles do not suffice.

\begin{example}[Line bundles that are big and nef do not suffice in \autoref{thm_main}]
	\label{examp}
	Assume $\kk$ is algebraically closed.
	Consider the following two successive blow-ups: 
	\begin{enumerate}[\rm (i)]
		\item Let $\pi_1 : Y = \Bl_{p}\left(\PP_\kk^3\right) \rightarrow \PP_\kk^3$ be the blow-up of $\PP_{\kk}^3$ along a closed point $p \in \PP_\kk^3$.
		Notice that the exceptional divisor $E_p\PP_\kk^3$ of $Y=\Bl_{p}\left(\PP_\kk^3\right)$ is isomorphic to $\PP_{\kk}^2$.
		\item Choose a line $C$ in $\PP_\kk^2 \cong E_p\PP_\kk^3$.
		Let $\pi_2 : X = \Bl_C(Y) \rightarrow Y$ be the blow-up of $Y$ along $C$.
		Notice that $X$ is a smooth threefold. 
		Let $E = E_CY$ be the exceptional divisor of $X = \Bl_C(Y)$.
	\end{enumerate}
	Let $\pi = \pi_1 \circ \pi_2 : X \rightarrow \PP_\kk^3$ be the natural projection.
	Let $\mathcal{L} = \pi^*\big(\OO_{\PP_\kk^3}(1)\big)$ be the line bundle on $X$  given as the pullback of $\OO_{\PP_\kk^3}(1)$.
	Notice that $\mathcal{L}$ is big and nef.
	Let $k \ge 2$ and consider the effective Cartier divisor $kE$.
	As closed subschemes,  we have $E \subsetneq kE \subset X$ (although they have the same support).
	We also obtain that their Segre classes are different
	$$
	s(E, X) \;=\; E - E^2 + E^3  \;\neq\; kE -k^2E^2 + k^3E^3  \;=\; s(kE, X);
	$$	
	here $E^i$ denotes the self-intersection of $E$
	(see \cite[Proposition 4.1(a)]{FULTON_INTER}).
	However, we can compute the vanishings
	$$
	\deg_{\mathcal{L}}\left(E\right) \;=\; \deg_{\mathcal{L}}\left(E^2\right) \;=\; \deg_{\mathcal{L}}\left(E^3\right) \;=\; 0.
	$$
	This shows that the big and nef line bundle $\mathcal{L}$ cannot be used in \autoref{thm_main}.
\end{example}
\begin{proof}
	We only need to show the claimed vanishings. 
	The projection formula yields 
	$$
	\deg_\mathcal{L}(E) \;=\; \int c_1\big(\OO_{\PP_\kk^3}(1)\big)^2 \smallfrown \pi_*\left([E]\right) \quad \text{ and } \quad \deg_\mathcal{L}(E^2) \;=\; \int c_1\big(\OO_{\PP_\kk^3}(1)\big) \smallfrown \pi_*\left(E \cdot [E]\right).
	$$
	By construction, $E$ is mapped onto the point $p$ via $\pi$.
	Since $[E] \in A_2(X)$ and $E\cdot [E] \in A_1(X)$, by dimension reasons and the definition of proper pushforward, we get $\pi_*([E]) = 0$ and $\pi_*(E \cdot [E]) = 0$ (see \cite[\S 1.4]{FULTON_INTER}).
	It then follows that $\deg_{\mathcal{L}}\left(E\right) = \deg_{\mathcal{L}}\left(E^2\right) = 0$.
	
	It remains to show that $\deg_\mathcal{L}(E^3)=\int E^3 \cdot [X] = 0$.
	Let $N = N_CY$ be the normal bundle to $C$ in $Y$.
	We have the isomorphism $E \cong \PP(N)$ (see \cite[B.7.1]{FULTON_INTER}).
	Let $\eta : E \cong \PP(N) \rightarrow C$ be the natural projection and $\xi = c_1(\OO_{\PP(N)}(1))$.
	From \cite[Example 8.3.4]{FULTON_INTER}, we have 
	$$
	A^*\left(\PP(N)\right) \;\cong\; A^*(C)[\xi]/\left(\xi^2+c_1(N)\xi\right);
	$$ notice that $c_2(N)=0$ since $\dim(C)=1$.
	Then we can compute 
	\begin{align*}
	\int E^3 \cdot [X] &\;=\; \int E^2 \cdot [E] \\
	&\;=\; \int \left(-c_1\left(\OO_{\PP(N)}(1)\right)\right)^2 \smallfrown \left[\PP(N)\right]  \text{\;\;\;(since $\OO_X(E) \cong \OO_X(-1)$)}\\
	&\;=\; -\int c_1(N) c_1\left(\OO_{\PP(N)}(1)\right) \smallfrown \left[\PP(N)\right] \text{\;\;\;(since $\xi^2+c_1(N)\xi=0$ in $A^*(\PP(N))$)}\\
	&\;=\; -\int c_1(N) \smallfrown \eta_*\left( c_1\left(\OO_{\PP(N)}(1)\right) \smallfrown \left[\PP(N)\right]\right) \text{\;\;\;(projection formula)}\\
	&\;=\; -\int c_1(N) \smallfrown [C] \text{\;\;\;(by \cite[Proposition 3.1(a)(ii)]{FULTON_INTER}).}
	\end{align*}
	Let $D = E_p\PP_\kk^3$ be the exceptional divisor of $Y=\Bl_{p}\left(\PP_\kk^3\right)$.
	Since $\iota : C \hookrightarrow D$ and $j:D \hookrightarrow Y$ are regular embeddings, \cite[B.7.4]{FULTON_INTER} yields the short exact sequence of vector bundles 
	$$
	0 \;\rightarrow\; N_CD \;\rightarrow\; N_CY \;\rightarrow\; \iota^*\left(N_DY\right) \;\rightarrow\; 0
	$$
	on $C$ (see also \cite[\href{https://stacks.math.columbia.edu/tag/063N}{Tag 063N}]{stacks-project}).
	By exploiting the isomorphisms $D \cong \PP_\kk^2$ and $C \cong \PP_\kk^1$, we obtain $N_CD \cong \OO_{\PP_\kk^1}(1)$ and $\iota^*(N_DY) \cong \OO_{\PP_\kk^1}(-1)$.
	Therefore, the Whitney sum formula yields
	$$
	\int E^3 \cdot [X] \;=\; -\int c_1(N) \smallfrown [C] \;=\; -(1-1) \;=\; 0.
	$$
	This concludes the proof.
\end{proof}

\section{Segre zeta functions and integral dependence}
\label{sect_zeta}

In this section, we show that Aluffi's Segre zeta function \cite{aluffi2017segre} yields an integral dependence criterion for homogeneous ideals in a polynomial ring. 
This power series encodes information about the Segre classes that arise when the ideal is extended to projective spaces of arbitrarily large dimension. 
Further developments regarding Segre zeta functions were made in \cite{JORGENSON, aluffi2024lorentzian,  MIXED_SEG_ZETA}.

Let $R = \kk[x_0,\ldots,x_n]$ be a polynomial ring over a field $\kk$ and $I \subset R$ be a homogeneous ideal of $R$.
Let $f_{1}, f_{2}, \dotsc, f_{r} \in R$ be homogeneous generators of $I$, with the degree of $f_{i}$ equal to $d_{i} = \deg\left(f_{i}\right)$.
Denote by $\iota : Z \hookrightarrow \PP_\kk^n$ the closed subscheme defined by $I \subset R$ in $\PP_\kk^n = \Proj(R)$. 
For each integer $N \ge n$, we consider the following objects:
\begin{itemize}[\;\;--]
	\item $R^N = \kk[x_0,\ldots,x_n, x_{n+1},\ldots, x_N]$ a polynomial ring containing $R$ and $\PP_\kk^N = \Proj(R^N)$.
	\item $\iota_N : Z^N \hookrightarrow \PP_\kk^N$ the closed subscheme defined by the extension of $I$ to $R^N$.
\end{itemize}

Aluffi \cite{aluffi2017segre} introduced and studied the \emph{Segre zeta function} $\zeta_I(t) = \sum_{k = 0}^\infty a_kt^k \in \ZZ[\![t]\!]$ of the homogeneous ideal $I \subset R$. 
This power series is characterized by the fact that, for all $N \ge n$, the class 
$$
\big(a_0 + a_1H + \cdots + a_NH^N\big) \smallfrown \left[\PP_\kk^N\right] \;\in\; A^*\left(\PP_\kk^N\right)
$$
equals the pushforward ${\iota_N}_*\left(s(Z^N, \PP_\kk^N)\right)$ to $\PP_\kk^N$ of the Segre class $s(Z^N, \PP_\kk^N)$; here $H$ denotes the class of a hyperplane in $\PP_\kk^N$.
An important result concerning Segre zeta functions is their \emph{rationality}.

\begin{theorem}[{Aluffi \cite{aluffi2017segre}}]
	We can write 
	$$
	\zeta_I(t) \;=\; \frac{P(t)}{(1+d_1t)(1+d_2t)\cdots(1+d_rt)}	
	$$
	where $P(t) \in \NN[t]$ is a polynomial with nonnegative integer coefficients.
\end{theorem}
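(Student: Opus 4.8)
The plan is to fix a large $N$, compute the pushforward $\iota_{N*}\big(s(Z^N,\PP_\kk^N)\big)$ explicitly through a projective bundle that absorbs the mixed degrees $d_1,\dots,d_r$, and read off $\zeta_I(t)$ from the resulting generating function. First I would record the stability built into the construction: extending $I$ from $R$ to $R^N$ corresponds to pulling back along the linear inclusions $\PP_\kk^n\hookrightarrow\PP_\kk^N$, under which Segre classes are compatible, so that the coefficients $a_k$ of $\zeta_I(t)=\sum_k a_k t^k$ are independent of $N$ once $N\ge k$. Hence it suffices to establish the stated rational form, with numerator in $\NN[t]$, for the class $\iota_{N*}\big(s(Z^N,\PP_\kk^N)\big)=\sum_k a_k\,H^{k}\smallfrown[\PP_\kk^N]$ for all large $N$, where $H$ denotes the hyperplane class.

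For the geometric setup, let $\mathcal{E}=\bigoplus_{i=1}^r\OO_{\PP_\kk^N}(d_i)$. The generators $f_1,\dots,f_r$ define a surjection $\mathcal{E}^\vee\surjects\II_{Z^N}$, which induces a surjection $\Sym(\mathcal{E}^\vee)\surjects\Rees(\II_{Z^N})$ and hence a closed immersion of the blow-up $\mathcal{B}_N:=\Bl_{Z^N}(\PP_\kk^N)=\fProj(\Rees(\II_{Z^N}))$ into $\PP(\mathcal{E}^\vee)=\fProj(\Sym(\mathcal{E}^\vee))$, under which $\OO_{\mathcal B_N}(1)$ is the restriction of the tautological bundle and satisfies $\OO_{\mathcal B_N}(1)\cong\OO_{\mathcal B_N}(-E)$ for the exceptional divisor $E$. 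Writing $\xi=c_1\big(\OO_{\PP(\mathcal{E}^\vee)}(1)\big)$, the Chern roots of $\mathcal{E}^\vee$ are $-d_iH$, so the projective bundle relation reads $\prod_{i=1}^r(\xi+d_i H)=0$, i.e. $\sum_{i=0}^r e_i\,H^i\xi^{r-i}=0$, where $e_i=e_i(d_1,\dots,d_r)$ are the elementary symmetric polynomials.

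Next I would compute. By birational invariance, $\iota_{N*}\big(s(Z^N,\PP_\kk^N)\big)=\pi_*\big([E]/(1+E)\big)=-\sum_{j\ge1}\pi_*\big(\xi^{j}\smallfrown[\mathcal B_N]\big)$, since $\xi|_{\mathcal B_N}=-E$. Pairing against $H^{N-k}$ and applying the projection formula, only the term $j=k$ survives, so $a_k=-m_k$ for $k\ge1$ and $a_0=0$, where $m_k:=\int_{\mathcal B_N}\xi^{k}\smallfrown(\pi^*H)^{N-k}$ and $m_0=\deg(\pi)=1$. Multiplying the bundle relation by $\xi^{k-r}(\pi^*H)^{N-k}$ and integrating over $\mathcal B_N$ yields the linear recurrence $\sum_{i=0}^r e_i\,m_{k-i}=0$ for all $k\ge r$. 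In terms of $M(t):=\sum_{k\ge0}m_k t^k$, this says $\prod_{i=1}^r(1+d_it)\cdot M(t)=Q(t)$ is a polynomial of degree at most $r-1$, whence
\[
\zeta_I(t)=\sum_{k\ge1}a_k t^k = 1-M(t)=\frac{\prod_{i=1}^r(1+d_it)-Q(t)}{\prod_{i=1}^r(1+d_it)}=:\frac{P(t)}{\prod_{i=1}^r(1+d_it)},
\]
which establishes rationality and pins down the denominator. (As a check, for $r=1$ one gets $m_k=(-d_1)^k$, $Q(t)=1$, and $P(t)=d_1t$.)

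The remaining — and genuinely hard — step is to prove $P(t)\in\NN[t]$. Integrality is automatic, since every $m_k$ is an intersection number of integral classes; the content is nonnegativity of the coefficients $p_k=\sum_{i=0}^{k}e_i\,a_{k-i}$, which is not apparent from the sign-alternating Segre numbers $a_k=-m_k$. The plan here is to reinterpret the $p_k$ as degrees of genuinely effective cycles. Concretely, I would pass to the globally generated (nef) line bundle on $\mathcal B_N$ furnished by the generators $f_1,\dots,f_r$ — the mixed-degree analogue of $\OO_{\mathcal{B}^\LL}(1)$ from \autoref{prop_blow_up_formula}, pulled back from the relatively ample $\OO_{\PP(\mathcal E)}(1)$ of the \emph{ample} bundle $\mathcal E$ — and express $P(t)$ through the degrees of the polar and Vogel cycles $\beta^i,\nu^i$ of the intersection algorithm of \autoref{def_inter_algo}. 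These cycles are effective, so $c_1(\text{nef})^i\smallfrown[\mathcal B_N]$ is a nonnegative cycle whose degree against an ample class is $\ge 0$; this is exactly the positivity underlying the proof of \autoref{thm_main} and \autoref{cor_numerical_snapper}. The main obstacle is the degree bookkeeping that makes this work for generators of distinct degrees $d_i$: one must arrange the twists so that the denominator produced by the globally generated bundle comes out as precisely $\prod_i(1+d_it)$ while keeping all the intervening cycles effective and the relevant intersections proper.
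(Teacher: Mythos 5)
First, a framing remark: the paper does not prove this theorem at all — it is quoted from Aluffi \cite{aluffi2017segre} — so your attempt has to stand entirely on its own. Its first half does stand. Granting that $\zeta_I(t)$ is well defined (which the paper builds into the definition, so you are entitled to use it; but note that your one-line justification of the stability in $N$ is not a proof of it — Segre classes are not functorial under the pullbacks you describe, and this stability is itself one of Aluffi's theorems), your computation is correct: the blow-up $\mathcal{B}_N$ embeds in $\fProj\left(\Sym(\mathcal{E}^\vee)\right)$ with $\xi|_{\mathcal{B}_N}=-E$, birational invariance gives $a_k=-m_k$ and $a_0=0$, the relation $\prod_{i=1}^r(\xi+d_iH)=0$ gives the recursion $\sum_{i=0}^r e_i m_{k-i}=0$ for $r\le k\le N$, and therefore $\zeta_I(t)\prod_{i=1}^r(1+d_it)$ is a polynomial of degree at most $r$ with integer coefficients. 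This is a complete and correct proof of rationality with the stated denominator.

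The genuine gap is the assertion $P(t)\in\NN[t]$, which is part of the statement and which you do not prove: you outline a plan and explicitly concede that its crucial step (the ``degree bookkeeping'' for generators of distinct degrees) is unresolved. This gap is not cosmetic, for two concrete reasons. (1) The positivity machinery you point to — the Vogel and polar cycles of \autoref{def_inter_algo}, \autoref{thm_van_Gastel} and \autoref{prop_blow_up_formula} — requires all defining sections to lie in $\HH^0(X,\LL)$ for a \emph{single} line bundle $\LL$; with mixed degrees this forces $\LL=\OO_{\PP_\kk^N}(D)$, $D=\max_i d_i$, replacing the $f_i$ by spanning sets of $I_D$, and the effectivity of Vogel cycles then produces a representation of $\zeta_I(t)$ whose denominator is a power of $(1+Dt)$, not $\prod_i(1+d_it)$. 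Nonnegativity of a numerator does not transfer between these two representations: rewriting a fraction with nonnegative numerator over $(1+Dt)^s$ as a fraction over $\prod_i(1+d_it)$ can and does introduce negative coefficients, so the equal-degree case does not formally imply the mixed-degree statement. (2) The nef class your sketch relies on does not exist as described: you conflate $\PP(\mathcal{E})$ with $\PP(\mathcal{E}^\vee)$; the ambient space actually containing $\mathcal{B}_N$ is $\fProj\left(\Sym(\mathcal{E}^\vee)\right)$, whose tautological class restricts on $\mathcal{B}_N$ to $-E$, which is negative on strict transforms of curves meeting $Z^N$ and hence not nef, while the only evident globally generated twist is $\xi+DH$ — which returns you to problem (1). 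A complete proof must realize each coefficient $p_k=\sum_i e_i a_{k-i}$ as the degree of an effective (or nef-positive) quantity in a way adapted to the individual degrees $d_1,\dots,d_r$; nothing in the proposal accomplishes this, so the delicate half of the theorem remains unproven.
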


By utilizing the birational invariance of Segre classes, Aluffi \cite{aluffi2017segre} showed that the Segre zeta function only depends on the integral closure of the ideal.
As a consequence of \autoref{thmA}, we obtain that, in fact, the Segre zeta function yields an integral dependence criterion.

\begin{corollary}
	\label{thm_main_zeta}
	Let $J \subset R = \kk[x_0,\ldots,x_n]$ be a homogeneous ideal such that $J \supseteq I$.
	Then the following two conditions are equivalent:
	\begin{enumerate}[\;\;\rm (a)]
		\item $J$ is integral over $I$.
		\item $\zeta_I(t) = \zeta_J(t)$.
	\end{enumerate} 
\end{corollary}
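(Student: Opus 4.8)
The plan is to apply \autoref{thm_main} on the projective spaces $\PP_\kk^N$ for all $N \ge n$, and then to descend the resulting integral dependence statement from the extended rings $R^N$ back to $R$. Throughout, fix $N \ge n$ and work on $X = \PP_\kk^N$, which is irreducible (hence equidimensional) and projective, with the ample line bundle $\LL = \OO_{\PP_\kk^N}(1)$. Since $I \subseteq J$ gives $I R^N \subseteq J R^N$, the closed subschemes satisfy $W^N = V(J R^N) \subseteq Z^N = V(I R^N)$, so \autoref{setup_segre_int_dep} applies. The first task is to reinterpret the equality of Segre zeta functions as condition (c) of \autoref{thm_main}. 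Writing $\zeta_I(t) = \sum_i a_i t^i$, the defining property of the Segre zeta function \cite{aluffi2017segre} gives ${\iota_N}_*\big(s(Z^N,\PP_\kk^N)\big) = \sum_{i=0}^N a_i\, H^i \smallfrown [\PP_\kk^N]$; extracting the component in $A_{N-i}(\PP_\kk^N)$ yields ${\iota_N}_*\big(s^i(Z^N,\PP_\kk^N)\big) = a_i\,[\PP_\kk^{N-i}]$, and the projection formula then identifies $a_i = \deg_{\LL}\big(s^i(Z^N,\PP_\kk^N)\big)$, with the analogous identity for $\zeta_J$. Consequently $\zeta_I(t) = \zeta_J(t)$ holds if and only if $\deg_{\LL}\big(s^i(Z^N,\PP_\kk^N)\big) = \deg_{\LL}\big(s^i(W^N,\PP_\kk^N)\big)$ for every $i\ge 0$ and every $N \ge n$ (for each fixed $i$ any $N \ge \max(n,i)$ detects $a_i$), which is exactly condition (c) of \autoref{thm_main} on each $\PP_\kk^N$.

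Next I would invoke \autoref{thm_main}: its equivalence (a)$\Leftrightarrow$(c) shows that the displayed degree equalities on $\PP_\kk^N$ are equivalent to $\II_{W^N}$ being integral over $\II_{Z^N}$ as ideal sheaves. It then remains to translate this sheaf-theoretic statement on $\Proj$ into integral dependence of the homogeneous ideals $I R^N \subseteq J R^N$ in $R^N$, and finally to descend to $R$. For the descent, I would use that integral closure commutes with the polynomial extension $R \hookrightarrow R^N$, so that $\overline{I R^N} = \overline{I}\,R^N$ and $\overline{J R^N} = \overline{J}\,R^N$ (see \cite{SwHu}); since $R \to R^N$ is faithfully flat, $\overline{I}\,R^N = \overline{J}\,R^N$ if and only if $\overline I = \overline J$, i.e. $J$ is integral over $I$. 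Chaining these equivalences establishes (a)$\Leftrightarrow$(b). The forward implication (a)$\Rightarrow$(b) can alternatively be read off directly from the birational invariance of Segre classes, as already observed by Aluffi; the content lies in (b)$\Rightarrow$(a).

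The main obstacle I expect is the translation between integral dependence of the \emph{ideal sheaves} $\II_{W^N},\II_{Z^N}$ on $\PP_\kk^N$ and integral dependence of the \emph{homogeneous ideals} $I R^N, J R^N$ in the graded ring, i.e. controlling a possible discrepancy at the irrelevant maximal ideal $\mm_N = (x_0,\ldots,x_N)$. This is precisely where passing to $N > n$ (the mechanism underlying the Segre zeta function) becomes indispensable: for $N > n$ the radical of $I R^N$ contains $(x_0,\ldots,x_n) \subsetneq \mm_N$, so $I R^N$ is no longer $\mm_N$-primary, and every Rees valuation of $I R^N$ is centered at a minimal prime lying off $\mm_N$. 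Because integral closure of homogeneous ideals commutes with homogeneous localization, $\overline{\II_{Z^N}}$ is the sheafification of $\overline{I R^N}$; and since no Rees valuation is centered at $\mm_N$, both $\overline{I R^N}$ and $\overline{J R^N}$ are $\mm_N$-saturated. Hence the sheaf-level equality $\overline{\II_{Z^N}} = \overline{\II_{W^N}}$ lifts to the ideal-level equality $\overline{I R^N} = \overline{J R^N}$, closing the gap. In effect, extending to higher-dimensional projective space converts a potentially $\mm$-primary situation in $R$ (which a single $\PP_\kk^n$ could not detect) into a positive-dimensional one in $R^N$, making the reduction to \autoref{thm_main} valid.
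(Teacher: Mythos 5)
Your outline coincides with the paper's own proof: identify the coefficients of $\zeta_I(t)$ with the degrees $\deg_{\OO_{\PP_\kk^N}(1)}\big(s^i(Z^N,\PP_\kk^N)\big)$, feed the resulting degree equalities into \autoref{thm_main} on one $\PP_\kk^N$ with $N$ large, pass from the sheaf-level equality $\overline{\II_{Z^N}} = \overline{\II_{W^N}}$ to the ideal-level equality $\overline{IR^N} = \overline{JR^N}$, and contract to $R$. The coefficient identification and the final descent (via $\overline{IR^N}=\overline{I}\,R^N$ and faithful flatness) are correct. The gap is in your justification of the crucial saturation step, i.e.\ that $\mm_N=(x_0,\ldots,x_N)$ is not an associated prime of $\overline{IR^N}$ or $\overline{JR^N}$, which is what lets you lift the sheaf equality to an ideal equality.

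Two of your assertions there are false as stated. First, the radical of $IR^N$ does not contain $(x_0,\ldots,x_n)$; rather $\sqrt{IR^N}=\sqrt{I}\,R^N \subseteq (x_0,\ldots,x_n)R^N$, with equality only when $I$ is $(x_0,\ldots,x_n)$-primary. Second, Rees valuations need not be centered at minimal primes: for $I=(x^2,xy)=x\cdot(x,y)\subset \kk[x,y]$, the Rees valuations are the divisorial valuation of the strict transform of $\{x=0\}$ and the order valuation of the origin, so one center is $(x,y)$, an embedded prime of $I=\overline{I}$ rather than a minimal one. Hence ``every Rees valuation of $IR^N$ is centered at a minimal prime lying off $\mm_N$'' cannot serve as the reason for saturatedness. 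The statement you actually need --- no Rees valuation of $IR^N$ (resp.\ $JR^N$) has center $\mm_N$, equivalently $\mm_N \notin \Ass\big(R^N/\overline{IR^N}\big)$ --- is true for $N>n$, but it requires an argument; the cleanest one uses exactly the commutation fact you already cite for the descent: since $\overline{IR^N}=\overline{I}\,R^N$ and $\overline{I}$ is homogeneous, every prime in $\Ass\big(R/\overline{I}\big)$ is homogeneous and hence contained in $(x_0,\ldots,x_n)$; by flatness of $R\hookrightarrow R^N$ one has $\Ass\big(R^N/\overline{I}\,R^N\big)=\big\{pR^N : p\in \Ass\big(R/\overline{I}\big)\big\}$, and for $N>n$ no such extended prime equals $\mm_N$. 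With this repair your argument goes through.

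For comparison, the paper closes the same gap by a different mechanism, entirely inside $R^N$: it fixes $N\ge\max\{r,s\}$ (the numbers of generators of $I$ and $J$) and invokes McAdam's bound that every associated prime of the integral closure of an ideal has height at most the minimal number of generators, so the height-$(N+1)$ prime $\mm_N$ cannot be associated to $\overline{IR^N}$ or $\overline{JR^N}$; it then concludes from the equality of saturations. Your route needs only $N>n$ but leans on commutation of integral closure with polynomial extension; the paper's needs $N\ge\max\{r,s\}$ and the height bound on associated primes. Either suffices, since the zeta function records all $N\ge n$.
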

\begin{proof}
	(a) $\Rightarrow$ (b): This implication is a direct consequence of the birational invariance of Segre classes (see \cite[Proposition 5.12]{aluffi2017segre} or the proof of the implication (a) $\Rightarrow$ (b) in \autoref{thm_main}).
	
	\smallskip
	
	(b) $\Rightarrow$ (a):
	Assume that $\zeta_I(t) = \zeta_J(t)$.
	Write $J = (g_1,\ldots,g_s)$,  where the $g_i$'s are homogeneous generators. 
	With a similar notation as before, set $W = V(J) \subset \PP_\kk^n$ and $W^N = V(JR^N) \subset \PP_\kk^N$ for all $N \ge n$.
	Fix $N \ge \max\{r, s\}$.
	Let $\mm = (x_0,\ldots,x_N)$ be the graded irrelevant ideal in $R^N$.
	By assumption, we have 
	$$
	\deg_{\OO_{\PP_\kk^N}(1)}\left(s^i(Z^N, \PP_\kk^N)\right) \;=\; \deg_{\OO_{\PP_\kk^N}(1)}\left(s^i(W^N, \PP_\kk^N)\right) \quad \text{ for all } \quad i \ge 0.
	$$
	Then \autoref{thmA} yields 
	$$
	\overline{\II_{Z^N}} \;=\; \overline{\II_{W^N}} \;\subset\; \OO_{\PP_\kk^N}.
	$$
	This, in turn, leads to the following equality of saturated homogeneous ideals
	$$
	\left(\overline{IR^N} : \mm^\infty\right) \;=\; \bigoplus_{v \ge 0}\, \HH^0\big(\PP_\kk^N, \overline{\II_{Z^N}}(v)\big) \;=\; \bigoplus_{v \ge 0}\, \HH^0\big(\PP_\kk^N, \overline{\II_{W^N}}(v)\big) \;=\; \left(\overline{JR^N} : \mm^\infty\right).
	$$	
	Since $N +1 > \max\{r,s\}$, the irrelevant ideal $\mm = (x_0,\ldots,x_N)$ cannot be an associated prime of $\overline{IR^N}$ or $\overline{JR^N}$. Indeed, every associated prime of the integral closure of an ideal has height at most the minimal number of generators of the ideal (see, e.g., \cite[Proposition 3.9 and Proposition 4.1]{MCADAM}, \cite[Theorem 4.1]{PTUV}).
	It then follows that 
	$$
	\overline{IR^N} \;=\; \left(\overline{IR^N} : \mm^\infty\right) \;=\; \left(\overline{JR^N} : \mm^\infty\right) \;=\; \overline{JR^N}.
	$$
	Since the map $R \rightarrow R^N$ is faithfully flat, we have that $\overline{IR^N} \cap R = \overline{\,I\,}$ and $\overline{JR^N} \cap R = \overline{\,J\,}$ (see \cite[Proposition 1.6.2]{SwHu}).
	Finally, this implies that $\overline{\,I\,} = \overline{\,J\,}$, as required.
\end{proof}

\section*{Acknowledgments}

The author was partially supported by NSF grant DMS-2502321.
We thank the reviewer for carefully reading our paper and for several comments and corrections.

\smallskip

\noindent
\textbf{Conflict of interest statement.} The  author states that there is no conflict of interest.

\smallskip

\noindent
\textbf{Statement about data availability.} The author states that the manuscript has no associated data.
	
\bibliographystyle{amsalpha}
\bibliography{references.bib}

@Book{MATSUMURA,
  author    = {Hideyuki Matsumura},
  title     = {Commutative Ring Theory},
  publisher = {Cambridge University Press},
  year      = {1989},
  series    = {Cambridge Studies in Advanced Mathematics volume 8},
  edition   = {1},
}

@Book{HARTSHORNE,
  author    = {Hartshorne, Robin},
  title     = {Algebraic geometry},
  publisher = {Springer-Verlag, New York-Heidelberg},
  year      = {1977},
  note      = {Graduate Texts in Mathematics, No. 52},
  pages     = {xvi+496},
}

@Article{HYRY_MULTGRAD,
  author     = {Hyry, Eero},
  title      = {The diagonal subring and the {C}ohen-{M}acaulay property of a multigraded ring},
  journal    = {Trans. Amer. Math. Soc.},
  year       = {1999},
  volume     = {351},
  number     = {6},
  pages      = {2213--2232},
  issn       = {0002-9947},
  doi        = {10.1090/S0002-9947-99-02143-1},
  fjournal   = {Transactions of the American Mathematical Society},
  mrclass    = {13A30 (13D45 13H10)},
  mrnumber   = {1467469},
  mrreviewer = {Ngo Viet Trung},
  url        = {https://doi.org/10.1090/S0002-9947-99-02143-1},
}

@Book{FULTON_INTER,
  author    = {Fulton, William},
  title     = {Intersection theory},
  publisher = {Springer-Verlag, Berlin},
  year      = {1998},
  volume    = {2},
  series    = {Ergebnisse der Mathematik und ihrer Grenzgebiete. 3. Folge. A Series of Modern Surveys in Mathematics [Results in Mathematics and Related Areas. 3rd Series. A Series of Modern Surveys in Mathematics]},
  edition   = {Second},
  isbn      = {3-540-62046-X; 0-387-98549-2},
  doi       = {10.1007/978-1-4612-1700-8},
  mrclass   = {14C17 (14-02)},
  mrnumber  = {1644323},
  pages     = {xiv+470},
  url       = {https://doi.org/10.1007/978-1-4612-1700-8},
}

@misc{stacks-project,
	author       = {The {Stacks project authors}},
	title        = {The Stacks project},
	howpublished = {\url{https://stacks.math.columbia.edu}},
	year         = {2025},
}

@Book{SwHu,
  author     = {Swanson, Irena and Huneke, Craig},
  title      = {Integral closure of ideals, rings, and modules},
  publisher  = {Cambridge University Press, Cambridge},
  year       = {2006},
  volume     = {336},
  series     = {London Mathematical Society Lecture Note Series},
  isbn       = {978-0-521-68860-4; 0-521-68860-4},
  mrclass    = {13B22 (13A18 13A30 13A35 13H15 14A05)},
  mrnumber   = {2266432},
  mrreviewer = {Liam O'Carroll},
  pages      = {xiv+431},
}

@article {VanGastel,
	AUTHOR = {van Gastel, Leendert J.},
	TITLE = {Excess intersections and a correspondence principle},
	JOURNAL = {Invent. Math.},
	FJOURNAL = {Inventiones Mathematicae},
	VOLUME = {103},
	YEAR = {1991},
	NUMBER = {1},
	PAGES = {197--222},
	ISSN = {0020-9910,1432-1297},
	MRCLASS = {14C17 (14E10 14N10)},
	MRNUMBER = {1079843},
	MRREVIEWER = {Gary\ P.\ Kennedy},
	DOI = {10.1007/BF01239512},
	URL = {https://doi.org/10.1007/BF01239512},
}

@book {FOV,
	AUTHOR = {Flenner, Hubert and O'Carroll, Liam and Vogel, Wolfgang},
	TITLE = {Joins and intersections},
	SERIES = {Springer Monographs in Mathematics},
	PUBLISHER = {Springer-Verlag, Berlin},
	YEAR = {1999},
	PAGES = {vi+307},
	ISBN = {3-540-66319-3},
	MRCLASS = {14C17 (13H15 14-02)},
	MRNUMBER = {1724388},
	MRREVIEWER = {L\^e\ Tu\^an\ Hoa},
	DOI = {10.1007/978-3-662-03817-8},
	URL = {https://doi.org/10.1007/978-3-662-03817-8},
}

@Article{GG,
	author     = {Gaffney, Terence and Gassler, Robert},
	title      = {Segre numbers and hypersurface singularities},
	journal    = {J. Algebraic Geom.},
	year       = {1999},
	volume     = {8},
	number     = {4},
	pages      = {695--736},
	issn       = {1056-3911},
	fjournal   = {Journal of Algebraic Geometry},
	mrclass    = {32S15 (13H15 32S10 32S25)},
	mrnumber   = {1703611},
	mrreviewer = {Aleksandr G. Aleksandrov},
}

@article{CRPU2,
	title={Multidegrees, families, and integral dependence},
	author={Cid-Ruiz, Yairon and Polini, Claudia and Ulrich, Bernd},
	journal={arXiv preprint arXiv:2405.07000},
	year={2024}
}

@Article{KLEIMAN_THORUP_GEOM,
	author     = {Kleiman, Steven and Thorup, Anders},
	title      = {A geometric theory of the {B}uchsbaum-{R}im multiplicity},
	journal    = {J. Algebra},
	year       = {1994},
	volume     = {167},
	number     = {1},
	pages      = {168--231},
	issn       = {0021-8693},
	doi        = {10.1006/jabr.1994.1182},
	fjournal   = {Journal of Algebra},
	mrclass    = {14C17 (13D40 13H15)},
	mrnumber   = {1282823},
	mrreviewer = {R\"{u}diger Achilles},
	url        = {https://doi-org.kuleuven.e-bronnen.be/10.1006/jabr.1994.1182},
}

@Article{KLEIMAN_THORUP_MIXED,
	author   = {Kleiman, Steven and Thorup, Anders},
	title    = {Mixed {B}uchsbaum-{R}im multiplicities},
	journal  = {Amer. J. Math.},
	year     = {1996},
	volume   = {118},
	number   = {3},
	pages    = {529--569},
	issn     = {0002-9327},
	fjournal = {American Journal of Mathematics},
	mrclass  = {14C17 (13H15)},
	mrnumber = {1393259},
	url      = {http://muse.jhu.edu.kuleuven.e-bronnen.be/journals/american_journal_of_mathematics/v118/118.3kleiman.pdf},
}

@incollection {KLEIMAN_PICARD,
	AUTHOR = {Kleiman, Steven L.},
	TITLE = {The {P}icard scheme},
	BOOKTITLE = {Fundamental algebraic geometry},
	SERIES = {Math. Surveys Monogr.},
	VOLUME = {123},
	PAGES = {235--321},
	PUBLISHER = {Amer. Math. Soc., Providence, RI},
	YEAR = {2005},
	ISBN = {0-8218-3541-6},
	MRCLASS = {14C22},
	MRNUMBER = {2223410},
}

@article {KLEIMAN_NUM,
	AUTHOR = {Kleiman, Steven L.},
	TITLE = {Toward a numerical theory of ampleness},
	JOURNAL = {Ann. of Math. (2)},
	FJOURNAL = {Annals of Mathematics. Second Series},
	VOLUME = {84},
	YEAR = {1966},
	PAGES = {293--344},
	ISSN = {0003-486X},
	MRCLASS = {14.55 (14.10)},
	MRNUMBER = {206009},
	DOI = {10.2307/1970447},
	URL = {https://doi-org.prox.lib.ncsu.edu/10.2307/1970447},
}

@article {SNAPPER,
	AUTHOR = {Snapper, Ernst},
	TITLE = {Multiples of divisors},
	JOURNAL = {J. Math. Mech.},
	FJOURNAL = {J. Math. Mech.},
	VOLUME = {8},
	YEAR = {1959},
	PAGES = {967--992},
	MRCLASS = {14.00},
	MRNUMBER = {109156},
	MRREVIEWER = {M.\ Nagata},
	DOI = {10.1512/iumj.1959.8.58062},
	URL = {https://doi-org.prox.lib.ncsu.edu/10.1512/iumj.1959.8.58062},
}

@article {LJT,
	AUTHOR = {Lejeune-Jalabert, Monique and Teissier, Bernard},
	TITLE = {Cl\^oture int\'egrale des id\'eaux et \'equisingularit\'e},
	NOTE = {With an appendix by Jean-Jacques Risler},
	JOURNAL = {Ann. Fac. Sci. Toulouse Math. (6)},
	FJOURNAL = {Annales de la Facult\'e{} des Sciences de Toulouse.
	Math\'ematiques. S\'erie 6},
	VOLUME = {17},
	YEAR = {2008},
	NUMBER = {4},
	PAGES = {781--859},
	ISSN = {0240-2963,2258-7519},
	MRCLASS = {32S15 (14B05 14P15)},
	MRNUMBER = {2499856},
	MRREVIEWER = {Marcelo\ Jos\'e\ Saia},
	URL = {http://afst.cedram.org/item?id=AFST_2008_6_17_4_781_0},
}

@book {LAZARSFELD2,
	AUTHOR = {Lazarsfeld, Robert},
	TITLE = {Positivity in algebraic geometry. {II}},
	SERIES = {Ergebnisse der Mathematik und ihrer Grenzgebiete. 3. Folge. A
	Series of Modern Surveys in Mathematics [Results in
	Mathematics and Related Areas. 3rd Series. A Series of Modern
	Surveys in Mathematics]},
	VOLUME = {49},
	NOTE = {Positivity for vector bundles, and multiplier ideals},
	PUBLISHER = {Springer-Verlag, Berlin},
	YEAR = {2004},
	PAGES = {xviii+385},
	ISBN = {3-540-22534-X},
	MRCLASS = {14-02 (14C20 14F05 14F17)},
	MRNUMBER = {2095472},
	MRREVIEWER = {Mihnea\ Popa},
	DOI = {10.1007/978-3-642-18808-4},
	URL = {https://doi-org.prox.lib.ncsu.edu/10.1007/978-3-642-18808-4},
}

@book {VASC_INT,
	AUTHOR = {Vasconcelos, Wolmer},
	TITLE = {Integral closure},
	SERIES = {Springer Monographs in Mathematics},
	NOTE = {Rees algebras, multiplicities, algorithms},
	PUBLISHER = {Springer-Verlag, Berlin},
	YEAR = {2005},
	PAGES = {xii+519},
	ISBN = {978-3-540-25540-6; 3-540-25540-0},
	MRCLASS = {13A30 (13-02 13B22 13D40 13H15)},
	MRNUMBER = {2153889},
	MRREVIEWER = {Ngo Viet Trung},
}

@article {aluffi2017segre,
	AUTHOR = {Aluffi, Paolo},
	TITLE = {The {S}egre zeta function of an ideal},
	JOURNAL = {Adv. Math.},
	FJOURNAL = {Advances in Mathematics},
	VOLUME = {320},
	YEAR = {2017},
	PAGES = {1201--1226},
	ISSN = {0001-8708,1090-2082},
	MRCLASS = {14C17 (11M41 14Q15)},
	MRNUMBER = {3709134},
	MRREVIEWER = {Zach\ Teitler},
	DOI = {10.1016/j.aim.2017.09.023},
	URL = {https://doi.org/10.1016/j.aim.2017.09.023},
}

@article {aluffi2024lorentzian,
	AUTHOR = {Aluffi, Paolo},
	TITLE = {Lorentzian polynomials, {S}egre classes, and adjoint
	polynomials of convex polyhedral cones},
	JOURNAL = {Adv. Math.},
	FJOURNAL = {Advances in Mathematics},
	VOLUME = {437},
	YEAR = {2024},
	PAGES = {Paper No. 109440, 37},
	ISSN = {0001-8708,1090-2082},
	MRCLASS = {14C17 (52A20)},
	MRNUMBER = {4674860},
	MRREVIEWER = {I.\ Dolgachev},
	DOI = {10.1016/j.aim.2023.109440},
	URL = {https://doi.org/10.1016/j.aim.2023.109440},
}

@Article{UV_CRIT_MOD,
	author     = {Ulrich, Bernd and Validashti, Javid},
	journal    = {Math. Res. Lett.},
	title      = {A criterion for integral dependence of modules},
	year       = {2008},
	issn       = {1073-2780},
	number     = {1},
	pages      = {149--162},
	volume     = {15},
	doi        = {10.4310/MRL.2008.v15.n1.a13},
	fjournal   = {Mathematical Research Letters},
	mrclass    = {13C15 (13B21 13B22)},
	mrnumber   = {2367181},
	mrreviewer = {Aron Simis},
	url        = {https://doi.org/10.4310/MRL.2008.v15.n1.a13},
}

@Article{UV_NUM_CRIT,
	author     = {Ulrich, Bernd and Validashti, Javid},
	journal    = {Math. Proc. Cambridge Philos. Soc.},
	title      = {Numerical criteria for integral dependence},
	year       = {2011},
	issn       = {0305-0041},
	number     = {1},
	pages      = {95--102},
	volume     = {151},
	doi        = {10.1017/S0305004111000144},
	fjournal   = {Mathematical Proceedings of the Cambridge Philosophical Society},
	mrclass    = {13A30 (13A02)},
	mrnumber   = {2801316},
	mrreviewer = {Florian Enescu},
	url        = {https://doi.org/10.1017/S0305004111000144},
}

@InCollection{GAFFNEY,
	author     = {Gaffney, Terence},
	title      = {Generalized {B}uchsbaum-{R}im multiplcities and a theorem of {R}ees},
	year       = {2003},
	note       = {Special issue in honor of Steven L. Kleiman},
	number     = {8},
	pages      = {3811--3827},
	volume     = {31},
	doi        = {10.1081/AGB-120022444},
	fjournal   = {Communications in Algebra},
	issn       = {0092-7872},
	journal    = {Comm. Algebra},
	mrclass    = {13H15 (13B22)},
	mrnumber   = {2007386},
	mrreviewer = {Marcel Morales},
	url        = {https://doi.org/10.1081/AGB-120022444},
}

@Article{FLENNER_MANARESI,
	author     = {Flenner, Hubert and Manaresi, Mirella},
	journal    = {Math. Z.},
	title      = {A numerical characterization of reduction ideals},
	year       = {2001},
	issn       = {0025-5874},
	number     = {1},
	pages      = {205--214},
	volume     = {238},
	doi        = {10.1007/PL00004900},
	fjournal   = {Mathematische Zeitschrift},
	mrclass    = {13H15 (13A30)},
	mrnumber   = {1860742},
	mrreviewer = {Keri Sather-Wagstaff},
	url        = {https://doi.org/10.1007/PL00004900},
}

@Article{BOEGER,
	author     = {B\"{o}ger, Erwin},
	journal    = {Math. Ann.},
	title      = {Einige {B}emerkungen zur {T}heorie der ganz-algebraischen {A}bh\"{a}ngigkeit von {I}dealen},
	year       = {1970},
	issn       = {0025-5831},
	pages      = {303--308},
	volume     = {185},
	doi        = {10.1007/BF01349952},
	fjournal   = {Mathematische Annalen},
	mrclass    = {13.95 (14.18)},
	mrnumber   = {263809},
	mrreviewer = {H. Yanagihara},
	url        = {https://doi.org/10.1007/BF01349952},
}

@Article{REES,
	author     = {Rees, David},
	journal    = {Proc. Cambridge Philos. Soc.},
	title      = {{${\mathfrak{a}}$}-transforms of local rings and a theorem on multiplicities of ideals},
	year       = {1961},
	issn       = {0008-1981},
	pages      = {8--17},
	volume     = {57},
	doi        = {10.1017/s0305004100034800},
	fjournal   = {Proceedings of the Cambridge Philosophical Society},
	mrclass    = {16.00},
	mrnumber   = {118750},
	mrreviewer = {H.\ T.\ Muhly},
	url        = {https://doi.org/10.1017/s0305004100034800},
}

@InCollection{TEISSIER_CYC,
	author     = {Teissier, Bernard},
	booktitle  = {Singularit\'{e}s \`a {C}arg\`ese ({R}encontre {S}ingularit\'{e}s {G}\'{e}om. {A}nal., {I}nst. \'{E}tudes {S}ci., {C}arg\`ese, 1972)},
	publisher  = {Soc. Math. France, Paris},
	title      = {Cycles \'{e}vanescents, sections planes et conditions de {W}hitney},
	year       = {1973},
	pages      = {285--362},
	series     = {Ast\'{e}risque},
	volume     = {Nos. 7 et 8},
	mrclass    = {32C40},
	mrnumber   = {374482},
	mrreviewer = {J.\ A.\ Morrow},
}

@Article{TEISSIER_RES2,
	author    = {Teissier, Bernard},
	journal   = {Séminaire sur les singularités des surfaces},
	title     = {Résolution simultanée : {II} - Résolution simultanée et cycles évanescents},
	year      = {1976-1977},
	pages     = {1-66},
	keywords  = {simultaneous resolution of singularities; weak simultaneous resolution; flat simultaneous resolution; equisingularity; strong resolution; equimultiplicity},
	language  = {fre},
	publisher = {Ecole Polytechnique, Centre de Mathématiques},
	url       = {http://eudml.org/doc/114142},
}

@Book{MCADAM,
	author     = {McAdam, Stephen},
	publisher  = {Springer-Verlag, Berlin},
	title      = {Asymptotic prime divisors},
	year       = {1983},
	isbn       = {3-540-12722-4},
	series     = {Lecture Notes in Mathematics},
	volume     = {1023},
	doi        = {10.1007/BFb0071575},
	mrclass    = {13E05 (13A17 13B20 13C15 13G05 13H99)},
	mrnumber   = {722609},
	mrreviewer = {David\ E.\ Dobbs},
	pages      = {ix+118},
	url        = {https://doi.org/10.1007/BFb0071575},
}

@Article{PTUV,
	author     = {Polini, Claudia and Trung, Ngo Viet and Ulrich, Bernd and Validashti, Javid},
	title      = {Multiplicity sequence and integral dependence},
	journal    = {Math. Ann.},
	year       = {2020},
	volume     = {378},
	number     = {3-4},
	pages      = {951--969},
	issn       = {0025-5831},
	doi        = {10.1007/s00208-020-02059-5},
	fjournal   = {Mathematische Annalen},
	mrclass    = {13B22 (13A30 13D40 14B05)},
	mrnumber   = {4163518},
	mrreviewer = {Catalin Ciuperca},
	url        = {https://doi.org/10.1007/s00208-020-02059-5},
}

@article {JORGENSON,
	AUTHOR = {Jorgenson, Grayson},
	TITLE = {A relative {S}egre zeta function},
	JOURNAL = {J. Pure Appl. Algebra},
	FJOURNAL = {Journal of Pure and Applied Algebra},
	VOLUME = {224},
	YEAR = {2020},
	NUMBER = {12},
	PAGES = {106437, 15},
	ISSN = {0022-4049,1873-1376},
	MRCLASS = {14C17},
	MRNUMBER = {4099922},
	MRREVIEWER = {Xia\ Liao},
	DOI = {10.1016/j.jpaa.2020.106437},
	URL = {https://doi-org.prox.lib.ncsu.edu/10.1016/j.jpaa.2020.106437},
}

@article{MIXED_SEG_ZETA,
	title={Mixed Segre zeta functions and their log-concavity},
	author={Cid-Ruiz, Yairon},
	journal={arXiv preprint arXiv:2507.06424},
	year={2025}
}

@article {cidruiz2024polar,
	AUTHOR = {Cid-Ruiz, Yairon},
	TITLE = {Polar multiplicities and integral dependence},
	JOURNAL = {Int. Math. Res. Not. IMRN},
	FJOURNAL = {International Mathematics Research Notices. IMRN},
	YEAR = {2024},
	NUMBER = {17},
	PAGES = {12201--12218},
	ISSN = {1073-7928,1687-0247},
	MRCLASS = {13H15 (13B21)},
	MRNUMBER = {4795000},
	MRREVIEWER = {Yu\ Xie},
	DOI = {10.1093/imrn/rnae163},
	URL = {https://doi-org.prox.lib.ncsu.edu/10.1093/imrn/rnae163},
}

@Article{SUV_MULT,
	author   = {Simis, Aron and Ulrich, Bernd and Vasconcelos, Wolmer V.},
	journal  = {Math. Proc. Cambridge Philos. Soc.},
	title    = {Codimension, multiplicity and integral extensions},
	year     = {2001},
	number   = {2},
	pages    = {237--257},
	volume   = {130},
	fjournal = {Mathematical Proceedings of the Cambridge Philosophical Society},
}

@article{cid2023relative,
	title={Relative mixed multiplicities and mixed {B}uchsbaum-{R}im multiplicities},
	author={Cid-Ruiz, Yairon},
	journal={arXiv preprint arXiv:2311.15105},
	year={2023}
}

@incollection {ALUFFI_SURVEY,
	AUTHOR = {Aluffi, Paolo},
	TITLE = {Segre classes and invariants of singular varieties},
	BOOKTITLE = {Handbook of geometry and topology of singularities {III}},
	PAGES = {419--492},
	PUBLISHER = {Springer, Cham},
	YEAR = {[2022] \copyright 2022},
	ISBN = {978-3-030-95759-9; 978-3-030-95760-5},
	MRCLASS = {14C17},
	MRNUMBER = {4461013},
	DOI = {10.1007/978-3-030-95760-5\_6},
	URL = {https://doi-org.prox.lib.ncsu.edu/10.1007/978-3-030-95760-5_6},
}

@article{das2024numerical,
	title={Numerical characterizations for integral dependence of graded ideals},
	author={Das, Suprajo and Roy, Sudeshna and Trivedi, Vijaylaxmi},
	journal={arXiv preprint arXiv:2409.09346},
	year={2024}
}

@book {VOGEL,
	AUTHOR = {Vogel, Wolfgang},
	TITLE = {Lectures on results on {B}ezout's theorem},
	SERIES = {Tata Institute of Fundamental Research Lectures on Mathematics
	and Physics},
	VOLUME = {74},
	NOTE = {Notes by D. P. Patil},
	PUBLISHER = {Tata Institute of Fundamental Research, Bombay; by
	Springer-Verlag, Berlin},
	YEAR = {1984},
	PAGES = {vi+132},
	ISBN = {3-540-12679-1},
	MRCLASS = {14C17 (13H15)},
	MRNUMBER = {743265},
	MRREVIEWER = {Manfred\ Herrmann},
	DOI = {10.1007/978-3-662-00493-7},
	URL = {https://doi-org.prox.lib.ncsu.edu/10.1007/978-3-662-00493-7},
}

@article {ACH_MANA,
	AUTHOR = {Achilles, R\"{u}diger and Manaresi, Mirella},
	TITLE = {Multiplicities of a bigraded ring and intersection theory},
	JOURNAL = {Math. Ann.},
	FJOURNAL = {Mathematische Annalen},
	VOLUME = {309},
	YEAR = {1997},
	NUMBER = {4},
	PAGES = {573--591},
	ISSN = {0025-5831,1432-1807},
	MRCLASS = {14C17 (13A30 13H15)},
	MRNUMBER = {1483824},
	MRREVIEWER = {Marc\ Chardin},
	DOI = {10.1007/s002080050128},
	URL = {https://doi-org.prox.lib.ncsu.edu/10.1007/s002080050128},
}

@article {BLQ,
	AUTHOR = {Blum, Harold and Liu, Yuchen and Qi, Lu},
	TITLE = {Convexity of multiplicities of filtrations on local rings},
	JOURNAL = {Compos. Math.},
	FJOURNAL = {Compositio Mathematica},
	VOLUME = {160},
	YEAR = {2024},
	NUMBER = {4},
	PAGES = {878--914},
	ISSN = {0010-437X,1570-5846},
	MRCLASS = {14B05 (13H15)},
	MRNUMBER = {4716588},
	MRREVIEWER = {Guillaume\ Rond},
	DOI = {10.1112/S0010437X23007972},
	URL = {https://doi-org.prox.lib.ncsu.edu/10.1112/S0010437X23007972},
}

@article {CIU,
	AUTHOR = {Ciuperc\u{a}, C\u{a}t\u{a}lin},
	TITLE = {A numerical characterization of the {$S_2$}-ification of a
	{R}ees algebra},
	JOURNAL = {J. Pure Appl. Algebra},
	FJOURNAL = {Journal of Pure and Applied Algebra},
	VOLUME = {178},
	YEAR = {2003},
	NUMBER = {1},
	PAGES = {25--48},
	ISSN = {0022-4049,1873-1376},
	MRCLASS = {13H15 (13A30 13D40)},
	MRNUMBER = {1947965},
	MRREVIEWER = {Maria\ Evelina\ Rossi},
	DOI = {10.1016/S0022-4049(02)00157-3},
	URL = {https://doi-org.prox.lib.ncsu.edu/10.1016/S0022-4049(02)00157-3},
}
	
\end{document}